\documentclass[letter,11pt,dvipsnames]{amsart}
\newcommand{\ignore}[1]{}

\addtolength{\textwidth}{4cm}
\addtolength{\textheight}{4cm}
\addtolength{\oddsidemargin}{-2cm}
\addtolength{\topmargin}{-2.5cm} 

\usepackage{verbatim,lipsum} 
\usepackage[x11names, rgb]{xcolor}
\usepackage[utf8]{inputenc}
\usepackage{tikz}
\usetikzlibrary{snakes,arrows,shapes}
\usepackage{fullpage}
\usepackage[colorlinks=false, pagebackref]{hyperref}  
\usepackage{microtype}
\usepackage{textcomp}
\usepackage{amsmath, amsfonts, amssymb, amsthm}
\usepackage{mathtools, mathdots}
\usepackage{mhsetup}
\usepackage{algorithm, algpseudocode, algorithmicx, float}
\usepackage{enumerate}
\usepackage{comment}

\DeclarePairedDelimiter\ceil{\lceil}{\rceil}

\newtheorem{theorem}{Theorem}

\newtheorem{corollary}[theorem]{Corollary}
\newtheorem{lemma}[theorem]{Lemma}
\newtheorem{definition}[theorem]{Definition}
\newtheorem{proposition}{Proposition}

\newtheorem{fact}{Fact}

\newtheorem{claim}[theorem]{Claim}

\newcommand{\claimproof}[2]%
{\noindent{\em Proof of Claim \ref{#1}.}
#2\hspace*{\fill}$\Box$~~~~\vspace{3.5mm} }

\newcommand{\F}{\mathbb{F}}
\newcommand{\N}{\mathbb{N}}

\newcommand{\Q}{\mathbb{Q}}
\newcommand{\Z}{\mathbb{Z}}
\newcommand{\C}{\mathbb{C}}

\newcommand{\rad}{\text{rad}}


\newcommand{\mb}{\mathbf}


\makeatletter

\makeatother

\makeatletter
\newcommand{\algmargin}{\the\ALG@thistlm}
\makeatother
\newlength{\whilewidth}
\settowidth{\whilewidth}{\algorithmicwhile\ }
\algdef{SE}[parWHILE]{parWhile}{EndparWhile}[1]
  {\parbox[t]{\dimexpr\linewidth-\algmargin}{%
     \hangindent\whilewidth\strut\algorithmicwhile\ #1\ \algorithmicdo\strut}}{\algorithmicend\ \algorithmicwhile}%
\algnewcommand{\parState}[1]{\State%
  \parbox[t]{\dimexpr\linewidth-\algmargin}{\strut #1\strut}}

\begin{document}

\pagenumbering{gobble}


\title{\bf Computing Igusa's local zeta function of univariates in deterministic polynomial-time}


\author{Ashish Dwivedi} 
\address{Department of Computer Science \& Engineering, Indian Institute of Technology Kanpur} 
\email{ashish@cse.iitk.ac.in} 
%
\author{Nitin Saxena} 
\address{Department of Computer Science \& Engineering, Indian Institute of Technology Kanpur} 
\email{nitin@cse.iitk.ac.in} 

\date{}
\maketitle

\begin{abstract}
Igusa's local zeta function $Z_{f,p}(s)$ is the generating function that counts the number of integral roots, $N_{k}(f)$, of $f(\mathbf x) \bmod p^k$, for all $k$. It is a famous result, in analytic number theory, that $Z_{f,p}$ is a rational function in $\Q(p^s)$. We give an elementary proof of this fact for a univariate polynomial $f$. Our proof is constructive as it gives a closed-form expression for the number of roots $N_{k}(f)$. 

Our proof, when combined with the recent root-counting algorithm of (Dwivedi, Mittal, Saxena, CCC, 2019), yields the first deterministic poly($|f|, \log p$) time algorithm to compute $Z_{f,p}(s)$. Previously, an algorithm was known only in the case when $f$ completely splits over $\Q_p$; it required the rational roots to use the concept of generating function of a tree (Z{\'u}{\~n}iga-Galindo, J.Int.Seq., 2003).
\end{abstract}

\vspace{-.30mm}
\noindent
{\bf MSC 2010 Classification:} Primary-- 11S40, 68Q01, 68W30. Secondary-- 11Y16, 14G50.

\noindent
{\bf Keywords:} Igusa, local, zeta function, discriminant, valuation, deterministic, root, counting, modulo, prime-power.  


\pagenumbering{arabic}

\vspace{-1mm}

\section{Introduction}

Over the years, the study of zeta functions have played a foundational role in the development of mathematics. They also find great applications in diverse science disciplines, in particular-- machine learning \cite{watanabe2009algebraic}, cryptography \cite{anshel1997zeta, anshel1998multi}, quantum cryptography\cite{li2005application}, statistics \cite{watanabe2009algebraic, lin2017ideal}, theoretical physics \cite{hawking1977zeta,reshetikhin2015combinatorial}, string theory \cite{polchinski1998string}, quantum field theory \cite{elizalde1996applications, hawking1977zeta}, biology \cite{robson2005clinical, willison2019intracellular}.
Basically, a zeta function counts some mathematical objects. Often zeta functions show special analytic, or algebraic properties, the study of which reveals various hidden and striking information about the encoded object. 

A classic example is the famous Riemann zeta function \cite{riemann1859ueber} (also known as Euler-Riemann zeta function) which encodes the density, and distribution, of prime numbers \cite{conrey2003riemann, titchmarsh1986theory}. Later many {\em local} (i.e.~associated to a specific prime $p$) zeta functions had been studied. Eg.~Hasse-Weil zeta function \cite{weil1948varietes,weil1949numbers} which encodes the count of zeros of a system of polynomial equations over finite fields (of a specific characteristic $p$). It lead to the development of modern algebraic-geometry (see \cite{deligne1974conjecture,grothendieck1964formule}). 

In this paper we are interested in a different local zeta function known as Igusa's local zeta function. It encodes the count of roots modulo prime powers of a given polynomial defined over a local field. Formally, {\em Igusa's local zeta function} $Z_{f,p}(s)$, attached to a polynomial over $p$-adic integers, $f(\mb{x})\in \Z_p[x_1,\ldots,x_n]$, is defined as, 
$$ Z_{f,p}(s) \;:=\; \int_{\Z_{p}^{n}} \lvert f(\mb{x}) \rvert_{p}^{s} \cdot\lvert d\mathbf x\rvert $$ 
where $s\in \C$ with $Re(s)>0$, $\lvert .\rvert_p$ denotes the absolute value over $p$-adic numbers $\Q_p$, and $\lvert d\mathbf x\rvert$ denotes the Haar measure on $\Q_{p}^{n}$ normalized so that $\Z_{p}^{n}$ has measure $1$.

Weil \cite{weil1964certains, weil1965formule} defined these zeta functions inspired by those of Riemann. Later they were studied extensively by Igusa \cite{igusa1974complex, igusa1975complex, igusa1978lectures}. Using the method of resolution of singularities, Igusa proved that $Z_{f,p}(s)$ converges to a rational function.  Later the convergence was proved by Denef \cite{denef1984rationality} via a different method (namely, $p$-adic cell decomposition). Igusa zeta function is closely related to {\em Poincar\'e series} $P(t)$, attached to $f$ and $p$, defined as \[ P(t) \;:=\; \sum_{i=0}^{\infty} N_{i}(f)\cdot (p^{-n}t)^i \] where $t\in \C$ with $\lvert t\rvert<1$, and $N_i(f)$ is the count on roots of $f\bmod p^i$ (also $N_0(f):=1$). In fact, it has been shown in \cite{igusa2000} that 
$$ P(t) \;=\; \frac{1-t\cdot Z_{f,p}(s)}{1-t}$$ 
with $t=:p^{-s}$. So rationality of $Z_{f,p}(s)$ implies rationality of $P(t)$ and vice versa; thus proving  a conjecture of \cite{borewicz1969si} that $P(t)$ is a rational function. This relation makes the local zeta function interesting in arithmetic geometry (see \cite{igusa2000, denef1991local, meuser2016survey,  cardenal2019introduction} for more on Igusa zeta function).

Many researchers have tried to calculate the expression for Igusa zeta function for various polynomial families  \cite{cowan2017new, robinson1996igusa, veys1997zeta, albis1999elementary, denef2001newton, marko2005igusa, vega2005igusa, ibadula2005plane, saia2005local, zuniga2001igusa, zuniga2003local} and this has lead to the development of various methodologies. For example, stationary phase formula (SPF), Newton polygon method, resolution of singularities, etc. These methods have been fruitful in various other situations \cite{du2000analytic, zuniga2004pseudo, zuniga2006decay, sakellaridis2008unramified, klopsch2009igusa, klopsch2009zeta, zuniga2009local, voll2010functional, segers2011exponential, zuniga2011local}. Though, not much has been said about their algorithmic aspect except in the case of resolution of singularities \cite{bierstone1997canonical, bodnar2000computer, bodnar2000automated, villamayor1989constructiveness}. These algorithms are impractical \cite{bierstone2011effective}. Indeed, the computation of Igusa zeta function for a general multivariate polynomial seems to be an intractable problem since root counting of a multivariate polynomial over a finite field is known to be NP-hard \cite{garey1979computers, ehrenfeucht1990computational}.

In this paper, we focus on the computation of Igusa zeta function when the associated polynomial is {\em univariate}. Igusa zeta function for a univariate polynomial $f$ is connected to root counting of $f$ modulo prime powers $p^k$, which is itself an interesting problem. It has applications in factoring \cite{chistov1987efficient, chistov1994algorithm, cantor2000factoring}, coding theory  \cite{berthomieu2013polynomial, salagean}, elliptic curve cryptography  \cite{lauder2004counting}, arithmetic algebraic-geometry \cite{zuniga2003computing, denef2001newton, denef1991local},  study of root-sets \cite{sierpinski1955remarques, chojnacka1956congruences, bhargava1997p, dearden1997roots, maulik2001root}. After a long series of work \cite{von1996factorization, von1998factoring, klivans1997factoring, salagean, berthomieu2013polynomial, sircana2017factorization, cheng2017counting, kopp2018randomized, dwivedi2019efficiently}, this was recently resolved in \cite{dwivedi2019counting}.

In the case of univariate polynomials one naturally expects an elementary proof of convergence, as well as an efficient algorithm to compute the Igusa zeta function. Our main result is,

\smallskip\noindent
{\em We give the first deterministic polynomial time algorithm to compute the rational function form of Igusa zeta function, associated to a given univariate polynomial $f\in \Z[x]$ and prime $p$.}
 
\smallskip
 To the best of our knowledge,  previously it was achieved for the restricted class of univariate polynomials using methods that were sophisticated, and non-explicit. For example, Z\'u\~niga-Galindo \cite{zuniga2003computing} achieves this, for univariate polynomials which completely split over $\Q$ (with the factorization given in the input), using stationary phase formula (see Section \ref{sec-remarks}). The methods to compute the Igusa zeta function for a multivariate, eg.~Denef \cite{denef1984rationality}, continue to be impractical in the case of univariate polynomials. On the contrary, our approach is elementary, uses explicit methods, and completely solves the problem.

\subsection{Our results}

We will compute the Igusa zeta function $Z_{f,p}(s)$ by finding the related Poincar\'e series $P(t)=: A(t)/B(t)$.

\begin{theorem}
\label{thm-main}
We are given a univariate integral polynomial $f(x)\in \Z[x]$ of degree $d$, with coefficients magnitude bounded by $C\in \N$, and a prime $p$. Then, we compute Poincar\'e series $P(t)=A(t)/B(t)$, associated with $f$ and $p$, in deterministic poly($d,\log C+\log p$) time. 

The degree of the integral polynomial $A(t)$ is $\tilde{O}(d^2\log C)$ and that of $B(t)$ is $O(d)$.
\end{theorem}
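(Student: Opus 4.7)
The plan is to reduce the computation of $P(t)$ to two manageable tasks: a $p$-adic structural decomposition of $f$, and a bounded number of queries to the root-counting algorithm of Dwivedi--Mittal--Saxena (DMS 2019). The decomposition supplies an elementary proof of rationality, and the DMS algorithm supplies the polynomial-time primitive.

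First, I would use Hensel lifting on the Newton polygon of $f$ over $\Z_p$, recursively, to partition the $p$-adic roots of $f$ into pairwise disjoint clusters. The output is a finite list of triples $\{(\alpha_j,\,K_j,\,m_j)\}_j$ with the following stability property: there is a threshold $K = \tilde O(d^2 \log C)$ (bounded via $v_p(\mathrm{disc}(f))$ times the recursion depth) such that for every $k\ge K$, each $r\in\Z/p^k\Z$ with $f(r)\equiv 0\pmod{p^k}$ lies in a unique cluster $\alpha_j + p^{K_j}\Z_p$, and on that cluster $f$ factors as $f(x)=(x-\alpha_j)^{m_j}g_j(x)$ with $g_j$ a $p$-adic unit throughout. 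The Newton-polygon recursion terminates because each step strictly decreases the $p$-adic discriminant valuation of the pieces it operates on.

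Second, inside each stabilized cluster the valuation is additive: $v_p(f(r)) = m_j v_p(r-\alpha_j) + v_j$, where $v_j := v_p(g_j(\alpha_j))$. Counting the lifts $r\in\Z/p^k\Z$ of $\alpha_j$ with $v_p(f(r))\ge k$ then yields the closed form
\[
 N_k(f)\;=\;\sum_{j} p^{\,k-\lceil(k-v_j)/m_j\rceil}\qquad\text{for every }k\ge K.
\]
Each summand is a quasi-linear exponential in $k$ that is periodic modulo $m_j$, so its generating function is rational in $t$ with denominator dividing $1 - p^{\,m_j-1} t^{m_j}$. Summing these geometric tails and adding the initial polynomial $\sum_{0\le k<K} N_k(f)\,(p^{-1}t)^k$ yields $P(t)=A(t)/B(t)$. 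The denominator $B(t)$ is a product over distinct multiplicities $m$ appearing in the decomposition, so $\deg B = O(\sum_j m_j) = O(d)$; and $\deg A \le \deg B + K = \tilde O(d^2\log C)$, matching the claimed bounds.

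The polynomial running time then follows from two ingredients: (i) the cluster data $(\alpha_j,K_j,m_j,v_j)$ is produced by the DMS representation in $\poly(d,\log C+\log p)$ time; and (ii) each of the $K$ initial values $N_k(f)$ is computed by one DMS root-count query in $\poly(d,\log C+\log p)$ time. The main obstacle is the first step: proving that the Newton-polygon/Hensel recursion stabilizes within the stated depth and, crucially, that distinct clusters do not interfere when we count residues. This is precisely where the DMS 2019 framework is essential---its ``representative root'' formalism is an explicit, polynomial-size, computable encoding of exactly this stabilization---and once that data is in hand the remainder of the argument is a routine geometric-series calculation.
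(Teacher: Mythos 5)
Your overall blueprint---a closed form for $N_k(f)$ beyond a fixed threshold, followed by a geometric-series summation---is the same as the paper's, and your formula $N_k(f)=\sum_j p^{\,k-\lceil(k-v_j)/m_j\rceil}$ is exactly Theorem \ref{thm-formula}. However, two steps are glossed over in ways that conceal where the real difficulty lies.

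First, you bound the stabilization threshold $K$ by ``$v_p(\mathrm{disc}(f))$ times the recursion depth.'' If $f$ has a repeated $\Z_p$-factor---which your own multiplicities $m_j>1$ explicitly permit---then $\mathrm{disc}(f)=0$ and $v_p(\mathrm{disc}(f))=\infty$, and the bound collapses. The paper instead works with $\Delta := v_p(D(\mathrm{rad}(f)))$, the discriminant valuation of the \emph{radical} of $f$ (Definitions \ref{def-rad}, \ref{def-delta}); this is what keeps the threshold $k_0=d(\Delta+1)+1$ finite and polynomially bounded in $d$ and $\log C$. A further consequence of using $\Delta$ is that the ``non-interference'' property between clusters you flag as the main obstacle comes out as a short argument (Claim \ref{claim3} and Lemma \ref{lemma-uni-assoc}), with no recursion at all.

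Second, and more importantly: you assert that ``the cluster data $(\alpha_j,K_j,m_j,v_j)$ is produced by the DMS representation.'' It is not. The DMS algorithm (Theorem \ref{thm-dms}) returns, for a chosen precision $k$, maximal split ideals whose lengths and degrees yield $N_k(f)$---but neither the multiplicity $m_j=e_i$ of the associated $\Z_p$-root nor the valuation $v_j=\nu_i$ of the $\alpha_i$-free part appears in the output. Extracting these deterministically is the algorithmic heart of the paper (Lemma \ref{lemma-vi-ei}): one runs the DMS algorithm at several successive precisions $k\ge k_0$, observes that the length of each split ideal increments by one exactly every $e_i$ steps (by the length formula $l_{i,k}=\lceil(k-\nu_i)/e_i\rceil$ of Theorem \ref{thm-root}), reads $e_i$ off this step size, and then recovers $\nu_i$ from $k_i-\nu_i=e_i\, l_{i,k_i}$. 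Without this observation your plan reduces to explicitly carrying out a Hensel/Newton-polygon factorization of $f$ over $\Z_p$---and no deterministic polynomial-time algorithm for $p$-adic factorization is known, which is precisely the obstruction the paper is engineered to circumvent.
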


\noindent{\bf Remarks-- (1)}
Our method gives an elementary proof of rationality of $Z_{f,p}(s)$ as a function of $t=p^{-s}$. 

{\bf (2)}
Previously, Z\'u\~niga-Galindo \cite{zuniga2003computing} gave a deterministic polynomial time algorithm to compute $Z_{f,p}(s)$, if $f$ completely splits over $\Q$ and the roots are provided. While, our Theorem \ref{thm-main} works for {\em any} input $f\in \Z[x]$ (see Section \ref{sec-remarks} for further discussion).

{\bf (3)}
Cheng et al.~\cite{cheng2017counting} could compute $Z_{f,p}(s)$ in deterministic poly-time, in the special case where the degree of $A(t), B(t)$ is constant.

{\bf (4)}
Dwivedi et al.~\cite{dwivedi2019counting}, using \cite{zuniga2003computing}, remarked that $Z_{f,p}(s)$ could be computed in deterministic poly-time, in the special case when $f$ completely splits over $\Q_p$ {\em without} the roots being provided in the input.
The detailed proof of this claim was not given and the convergence relied on the old method of \cite{zuniga2003computing}.

\smallskip
We achieve the rational form of $Z_{f,p}(s)$ by getting an explicit formula for the number of zeros $N_{k}(f)$, of $f\bmod p^k$, which sheds new light on the properties of the function $N_{k}(\cdot)$. Eventually, it gives an elementary proof of the rationality of the Poincar\'e series $\sum_{i=0}^{\infty} N_{i}(f) \cdot(p^{-1}t)^i$.

\begin{corollary}
\label{cor1}
Let $k$ be large enough, namely, $k\geq k_0:= O(d^2(\log C+\log d))$. Then, we give a closed form expression for $N_k(f)$ (in Theorem \ref{thm-formula}).

Interestingly, if $f$ has non-zero discriminant, then $N_k(f)$ is constant (independent of $k$) for all $k\geq k_0$. 
\end{corollary}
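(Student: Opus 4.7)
My plan is to lift the counting problem from $\Z/p^k\Z$ to $\Z_p$ and analyse it through a Hensel-type factorization there. First I would strip the $p$-content, writing $f = p^{v_0}\tilde f$ so that $N_k(f)=p^{v_0}\,N_{k-v_0}(\tilde f)$ for $k\ge v_0$, and then factor $\tilde f = \prod_{i=1}^m g_i^{e_i}$ into distinct irreducibles $g_i\in\Z_p[x]$. For $r\in\Z_p$ this yields the additive identity
\[
  v_p(\tilde f(r)) \;=\; \sum_{i=1}^m e_i\, v_p(g_i(r)),
\]
reducing the problem to understanding each map $r\mapsto v_p(g_i(r))$ on $\Z/p^k\Z$.

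Next I would classify the factors. For a linear $g_i(x)=x-\alpha_i$ with $\alpha_i\in\Z_p$, the value $v_p(g_i(r))$ is just the $p$-adic distance from $r$ to $\alpha_i$, and a Taylor expansion of $\tilde f$ at $\alpha_i$ shows that once $v_p(r-\alpha_i)$ is large enough, $v_p(\tilde f(r))$ depends linearly on $v_p(r-\alpha_i)$ with slope $e_i$ and an additive shift read off from the local derivatives. For an irreducible $g_i$ of degree $\ge 2$, $g_i$ has no root in $\Z_p$, so $|g_i(r)|_p$ is bounded below on the compact set $\Z_p$; hence $v_p(g_i(r))$ is uniformly bounded by some $V_i$. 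Moreover, the balls $B_j(\alpha_i):=\{r\in\Z_p:v_p(r-\alpha_i)\ge j\}$ around distinct $\Z_p$-roots are pairwise disjoint once $j>v_p(\mathrm{disc}(\tilde f))$. Invoking the Mignotte/Hadamard bound $\log|\mathrm{disc}(f)|=O(d\log(dC))$, and allowing an extra factor of $d$ to absorb the degrees and multiplicities, all of these thresholds lie below $k_0=O(d^2(\log C+\log d))$.

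With these pieces in place, for $k\ge k_0$ I would decompose $N_k(f)$ as a sum of (i) a contribution from each ball $B_{k_0}(\alpha_i)$ around a $\Z_p$-root of $\tilde f$, of the form $p^{k(1-1/e_i)+O(1)}$, and (ii) a $k$-independent residual from the finitely many $r$'s at depth $\le V_i$ associated with the nonlinear factors; this is the closed form asserted in Theorem \ref{thm-formula}. For the second half, $\mathrm{disc}(f)\ne 0$ forces every $e_i=1$, and then each ball $B_{k_0}(\alpha_i)$ contributes exactly $p^{t_{\alpha_i}}$ lifts (where $t_{\alpha_i}=v_p(\tilde f'(\alpha_i))$), while the nonlinear factors add nothing; both quantities are $k$-independent, so $N_k(f)$ is constant. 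The main obstacle I anticipate is the multiple-root bookkeeping: around a root of multiplicity $e_i>1$ the additive $O(1)$ shift is determined by the Newton polygon of $\tilde f$ at $\alpha_i$, and one must verify that cross-terms between distinct factors do not spoil the quasi-periodic shape of the resulting closed form.
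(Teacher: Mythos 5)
Your plan follows essentially the same route as the paper: factor $f$ over $\Z_p$ into linear pieces $(x-\alpha_i)^{e_i}$ and root-free irreducibles, analyze the valuation of each factor separately, identify a discriminant threshold past which the roots of $f\bmod p^k$ separate into disjoint balls around the distinct $\Z_p$-roots, and observe that $D(f)\ne0$ forces every $e_i=1$, making each ball contribute a $k$-independent $p^{\nu_i}$. Two points need repair. First, your disjointness threshold ``$j>v_p(\mathrm{disc}(\tilde f))$'' and the Mignotte/Hadamard bound you invoke are stated for the discriminant of $\tilde f$ (resp.\ $f$) itself; when $f$ has a repeated irreducible factor over $\Z_p$ this discriminant is $0$, its valuation is $\infty$, and the bound becomes vacuous --- exactly in the non-squarefree case that the first half of the corollary must still handle. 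The paper sidesteps this by setting $\Delta:=v_p(D(\mathrm{rad}(f)))$, the valuation of the discriminant of the \emph{radical}, which is always finite and still controls root separation (Fact~\ref{fact-delta}: $v_p(\alpha_i-\alpha_j)\le\Delta/2$). Second, the assertion that there is a ``$k$-independent residual'' from the nonlinear factors is off: once $k>d(\Delta+1)$, Lemma~\ref{lemma-uni-assoc} shows \emph{every} root of $f\bmod p^k$ lies in one of the balls around a $\Z_p$-root, so that residual is identically zero, which is why the formula in Theorem~\ref{thm-formula} has no extra term. Finally, the ``cross-term'' worry you raise at the end is exactly what Lemma~\ref{lemma-uni-val} settles cleanly: it proves $v_p(f_i(\bar\alpha))=\nu_i$ is constant on the whole neighborhood, so the additive shift is precisely $\nu_i$ and the exponent is the exact ceiling $k-\lceil(k-\nu_i)/e_i\rceil$ rather than merely $k(1-1/e_i)+O(1)$, giving a genuine closed form rather than an asymptotic.
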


\subsection{Further remarks \& comparison}
\label{sec-remarks}

To the best of our knowledge, there have been very few results on the complexity of computing Igusa zeta function for univariate polynomials \cite{zuniga2003computing, cheng2017counting}. Other very specialized algorithms are for bivariate polynomials (eg.~hyperelliptic curves) \cite{cardenalalgorithm}, and for the polynomial $x^q-a$ \cite{vega2005igusa}. In a recent related work \cite[Appendix~A]{zhu2020trees}, a different proof of rationality of Igusa zeta function for univariate polynomials based on tree based algorithm of \cite{kopp2018randomized} is given.

An old proof technique called {\em stationary phase formula} is the standard method used in literature to compute Igusa zeta function of various families of polynomials. Our work, on the other hand, uses elementary techniques and a tree based root-counting algorithm \cite{dwivedi2019counting}, to compute some fixed parameters (independent of $k$) involved in our formula of $N_k(f)$, for all $k\geq k_0$. 

It is to be noted that just  efficiently computing $N_k(f)$, for `several' $k$, is not enough to compute the rational form of  $Z_{f,p}(s)$; neither does it imply the rationality of $Z_{f,p}(s)$ directly.

Our algorithm is {\em deterministic} and works for general $f\in \Z_p[x]$. For earlier methods to work for $f\in \Z_p[x]$ they may need factoring over $p$-adics $\Z_p$ or $\Q_p$ (for eg. \cite{zuniga2003computing}); but deterministic algorithms there are unknown. See \cite{chistov1987efficient, chistov1994algorithm, cantor2000factoring} for randomized factoring algorithms.

\subsection{Proof idea}

We will compute the rational form of Igusa zeta function via computing the rational form of corresponding Poincar\'e series $P(t):=$ $\sum_{i=0}^{\infty} N_i(f)\cdot (p^{-1} t)^i$. In addition, our method proves that the Poincar\'e series is a rational function of $t$, in case of univariate polynomial $f(x)$, via first principles; instead of using advanced tools like `stationary phase method' or `Newton polygon method' or `resolution of singularity'.

To compute rational form of Poincar\'e series the idea is to compute the coefficient sequence $\{ N_0(f),\ldots,N_k(f),\ldots\}$ in a closed form. That is to say, get an explicit formula for $N_k(f)$, the number of roots of $f\bmod p^k$, only in terms of $k$; with the hope that this will help in getting a rational function for the Poincar\'e series $P(t)$.

Indeed in Theorem \ref{thm-formula}, we show that such a formula exists for each $N_k(f)$ for sufficiently large $k$. We achieve this by establishing a connection among roots of $f\bmod p^k$ and $\Z_p$-roots of $f\in \Z_p[x]$. Let $f$ has $n$ distinct $\Z_p$-roots $\alpha_1,\ldots,\alpha_n$. An important concept we define is that of `neighborhood' of an $\alpha_i$ mod $p^k$ (Definition \ref{def-S_ki}); these are basically roots of $f\bmod p^k$ `associated' to $\alpha_i$.
 In Lemma \ref{lemma-uni-assoc}, we show that {\em each} root $\bar{\alpha}$ of $f\bmod p^k$ is `associated' to a {\em unique} $\Z_p$-root $\alpha_i$ of $f$ : $\bar{\alpha}$ closely approximates $\alpha_i$ but is quite far from other $\alpha_j$s, for all $j\in [n], j\neq i$. So, root-set of $f\bmod p^k$ can be partitioned into $n$ subsets $S_{k,i}$, $i\in [n]$, where neighborhood $S_{k,i}$ is the set of those roots of $f\bmod p^k$ which are associated to $\Z_p$-root $\alpha_i$.

Say, multiplicity of root $\alpha_i$ is $e_i$, then $f(x)=: (x-\alpha_i)^{e_i} f_i(x)$ over $\Z_p$, where $f_i(\alpha_i)\ne0$. We call $f_i$ the {\em $\alpha_i$-free part} of $f$. Then, for $\bar{\alpha}$ to be a root of $f\bmod p^k$ we must have $f(\bar{\alpha})=(\bar{\alpha}-\alpha_i)^{e_i}\cdot f_i(\bar{\alpha})\equiv 0\bmod p^k$. Lemma \ref{lemma-uni-val} says that $f_i$ possesses equal valuation $\nu_i$, for all roots of $f\bmod p^k$ associated to $\alpha_i$, i.e, ones in $S_{k,i}$. That is, the maximum power of $p$ dividing $f_i(\bar{\alpha})$ is the same as that for $f_i(\bar{\beta})$, as long as $\bar{\alpha},\bar{\beta}\in S_{k,i}$. Note that, $v_p\left( (\bar{\alpha}-\alpha_i)^{e_i}\cdot f_i(\bar{\alpha}) \right) \ge k$ iff $v_p\left( (\bar{\alpha}-\alpha_i) \right) \ge (k-\nu_i)/e_i$. 

Eventually, these two lemmas together give us the size of the neighborhood, $|S_{k,i}|=p^{k-\ceil{(k-\nu_i)/e_i}}$. Moreover, the neighborhoods disjointly cover all the roots of $f\bmod p^k$. Hence, $N_k(f)=\sum_{i=1}^{n} |S_{k,i}|$. This is a formula for $N_k(f)$, when $k$ is large. But still the two parameters $\nu_i$ and $e_i$ are unknown as, unlike \cite{zuniga2003computing}, we are not provided the factorization of $f$ over $\Z_p$ (nor  could we find it in deterministic poly-time).

To compute $\nu_i, e_i$, we take help of root-counting algorithm of \cite{dwivedi2019counting}, which gives us the value of $N_k(f)$, and the underlying root-set structure  that it developed. We show that each representative-root $\bar{\alpha}_i$ of $f\bmod p^k$ is indeed the neighborhood $S_{k,i}$ (Theorem \ref{thm-property}) shedding new light on the root-set mod prime-powers.

Now we can get two equations, for the two unknowns $\nu_i, e_i$, by calling twice the algorithm of \cite{dwivedi2019counting}-- first for $k=k_i$ and second for $k=k_i+e_i$, where $k_i$ is such that $(k_i-\nu_i)/e_i$ is an integer (eg.~we can try all $k_i$ in the range $[k_0,\dots,k_0+\deg(f)]$). So, we can efficiently compute $\nu_i, e_i$ for a particular {\em representative-root} $\bar{\alpha}_i$, $i\in [n]$. So, this calculation also reveals some new parameters of representative-roots which were not mentioned in earlier related works \cite{berthomieu2013polynomial, dwivedi2019counting}.

\section{Preliminaries}

\subsection{Root-set of a univariate polynomial mod prime-powers}
\label{sec-root-set}


We recall a structural property (and related objects) of the root-set of univariate polynomials in the ring $\Z/\langle p^k\rangle$ \cite{dwivedi2019counting, dwivedi2019efficiently}. It says,

\begin{proposition}
\label{prop-root-set}
The root-set of an integral univariate polynomial $f$, over the ring of integers modulo prime powers, is the disjoint union of at most $\deg(f)$ many, efficiently representable, subsets.
\end{proposition}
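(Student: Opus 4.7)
The plan is to partition the root-set of $f$ mod $p^k$ into ``neighborhoods,'' one per distinct $p$-adic root of $f$ in $\Z_p$. Factor $f(x) = \prod_{i=1}^n (x - \alpha_i)^{e_i} \cdot u(x)$ over $\Z_p$, with the $\alpha_i$ pairwise distinct and $u$ having no root in $\Z_p$; since this is a factorization inside $\Z_p[x]$, we have $n \leq \deg(f)$, which is where the $\deg(f)$ bound will come from.

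First I would show that, for $k$ sufficiently large, every root $\bar\beta$ of $f \bmod p^k$ is ``close'' to exactly one $\alpha_i$. Expanding $v_p(f(\beta)) = \sum_i e_i\, v_p(\beta - \alpha_i) + v_p(u(\beta))$, the constraint $v_p(f(\beta)) \geq k$ forces some $v_p(\beta - \alpha_i)$ to be large. Since the pairwise valuations $v_p(\alpha_i - \alpha_j)$ are bounded by a constant $M$ depending only on $f$, the ultrametric inequality implies that at most one $v_p(\beta - \alpha_i)$ can exceed $M$. This singles out a unique index $i(\beta)$ and partitions the root-set into at most $n \leq \deg(f)$ neighborhoods $S_{k,i}$.

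Next I would show that each $S_{k,i}$ is a single arithmetic progression. For $\beta$ sufficiently close to $\alpha_i$, the quantities $v_p(\beta - \alpha_j)$ for $j \neq i$ and $v_p(u(\beta))$ stabilize to $v_p(\alpha_i - \alpha_j)$ and $v_p(u(\alpha_i))$ respectively; hence the defining inequality $v_p(f(\beta)) \geq k$ collapses to $v_p(\beta - \alpha_i) \geq \lceil (k - \nu_i)/e_i \rceil$ for a single constant $\nu_i$. This is exactly one residue class modulo $p^{m_i}$ with $m_i := \lceil (k - \nu_i)/e_i \rceil$, compactly representable by the pair $(r_i, m_i)$ with $r_i$ a truncation of $\alpha_i$.

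The small-$k$ regime is handled by a Hensel-style induction: roots mod $p^k$ fiber over roots mod $p^{k-1}$, and at each branching a root either lifts uniquely or its entire coset of $p$ lifts, still producing at most $\deg(f)$ eventual branches. The main obstacle is to produce this representation \emph{efficiently}, without access to $\alpha_i \in \Z_p$ themselves (finding them would require $p$-adic factorization, for which no deterministic poly-time algorithm is known). This is resolved by the tree-based root-counting algorithm of \cite{dwivedi2019counting, dwivedi2019efficiently}, which extracts the pairs $(r_i, m_i)$ directly from $f$ by a divide-and-conquer on residue classes mod increasing powers of $p$, without ever computing $\alpha_i$.
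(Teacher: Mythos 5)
The paper does not actually prove Proposition~\ref{prop-root-set}; it is recalled verbatim from \cite{berthomieu2013polynomial,dwivedi2019efficiently,dwivedi2019counting}, where it is established by a recursive, tree-based argument: one branches on the residue of a root modulo~$p$, substitutes $x \mapsto a_0 + py$, divides out the maximal power of $p$, and recurses, using a degree/multiplicity-tracking invariant to show that the total number of leaves is at most $\deg(f)$ for \emph{every} $k$. Your proposal instead tries to derive the proposition from the $p$-adic factorization of $f$ via the ``neighborhood'' decomposition. For $k$ large (concretely $k > d(\Delta+1)$ in the paper's notation), this is exactly the analysis the paper develops in Section~3 (Lemmas~\ref{lemma-uni-assoc},~\ref{lemma-uni-val} and Theorems~\ref{thm-property},~\ref{thm-root}), and your sketch of it is essentially correct: the ultrametric inequality together with the bound $v_p(\alpha_i - \alpha_j) \le \Delta/2$ forces a unique nearest $\alpha_i$, the valuation of the $\alpha_i$-free part stabilizes, and each $S_{k,i}$ collapses to a single residue class of the predicted length.

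The genuine gap is the small-$k$ regime, which you dispose of with a ``Hensel-style induction.'' That paragraph does not constitute a proof of the $\deg(f)$ bound. The statement ``at each branching a root either lifts uniquely or its entire coset of $p$ lifts, still producing at most $\deg(f)$ eventual branches'' is where the whole content of the proposition lives, and it is not justified: when an entire coset lifts, that coset can later shatter into up to $p$ sub-cosets at the next level, and nothing in your argument caps the cumulative branching at $\deg(f)$. The actual bound requires tracking how the effective degree (or the sum of multiplicities) of the quotient polynomials decreases across recursive calls, which is precisely the BLQ/DMS invariant you would need to reconstruct. Moreover, the neighborhood argument alone cannot rescue this, since for $k \le d(\Delta+1)$ a root of $f \bmod p^k$ need not be close to any single $\alpha_i$, so the partition into $n$ neighborhoods is not even well defined there. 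In short: your large-$k$ analysis is a valid (and, relative to the paper, genuinely alternative) derivation of the conclusion in that range, but the proposition is a claim about all $k$, and for small $k$ you have only gestured at the recursive argument rather than supplied it.
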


We call these efficiently representable subsets-- {\em representative-roots}, as was defined and named in \cite[Sec.~2]{dwivedi2019efficiently}. This property of root-sets in $\Z/\langle p^k\rangle$ is indeed a generalization of the property of root-sets over a field: there are at most $\deg(f)$ many roots of $f(x)$ in a field.

To present representative-roots formally, we first reiterate some notations from \cite[Sec.~2]{dwivedi2019efficiently}.

\smallskip\noindent
\textbf{Representatives. } An abbreviation $*$ will be used to denote all of the underlying ring $R$. So for the ring $R= \Z/\langle p^k\rangle$, $*$ denotes all the $p^k$ distinct elements. Perceiving any element of $R$ in base-$p$ representation, like $x_0+p x_1+\ldots+p^{k-1}x_{k-1}$ where $x_i\in \{ 0,\ldots,p-1\}$ for all $i\in \{ 0,\ldots,k-1\}$, the set $\mb{a}:= a_0+p a_1+\ldots+ p^{l-1}a_{l-1} + p^l*$ `represents' the set of all the elements of $R$ which are congruent to $a_0+p a_1+\ldots+p^{l-1}a_{l-1} \bmod p^l$. Throughout the paper we call such sets {\em representatives} and we denote them using bold small letters, like $\mb{a}, \mb{b}$ etc. 

Let us denote the {\em length} of a representative $\mb{a}$ by $|\mb{a}|$, so if $\mb{a}:= a_0+p a_1+\ldots+ p^{l-1}a_{l-1} + p^l*$ then its length is $|\mb{a}|=l$. Now we formally define representative-roots of a univariate polynomial in $\Z/\langle p^k\rangle$.


\begin{definition}[Representative-roots]
\label{def-rep-root}
A set $\mb{a}=a_0+p a_1+\ldots+p^{l-1}a_{l-1}+p^l*$ is called a {\em representative-root} of $f(x)$ modulo $p^k$ if each $\alpha\in \mb{a}$ is a root of $f(x)\bmod p^k$, but, not all $\beta\in \mb{b}:= a_0+p a_1+\ldots+p^{l-2}a_{l-2}+p^{l-1}*$ are roots of $f(x)\bmod p^k$.
\end{definition}



It was first observed in the work of \cite{berthomieu2013polynomial} that there are at most $\deg(f)$-many representative-roots and they gave an efficient randomized algorithm to compute all these representative-roots (for a simple exposition of the algorithm see \cite[Sec.B]{dwivedi2019efficiently}).

We need a deterministic algorithm for our purpose (in Section  
\ref{sec-main-3}) to count, if not find, the representative-roots (as well as count the roots in each representative-root). So we use the deterministic poly-time algorithm of \cite{dwivedi2019counting} which returns all these representative-roots implicitly in the form of a data-structure they call--- {\em maximal split ideals} (MSI). The two explicit parameters, {\em length} and {\em degree} of a MSI immediately gives the count on the number of representative-roots (as well as roots) encoded by them, which suffices for our purpose. A similar idea to use triangular ideals for encoding roots was first appeared in the work of \cite{cheng2017counting}, to count roots deterministically but for `small' $k$.

We now define MSI from \cite[Sec.~2]{dwivedi2019counting}.

\begin{definition}{\cite[Sec.~2]{dwivedi2019counting} {\em(Maximal Split Ideals)}}
\label{def-MSI}
A triangular ideal $I=\langle h_0(x_0),\ldots,h_l(x_0,\ldots,x_l)\rangle$, where $0\leq l\leq k-1$ and each $h_i(x_0,\ldots,x_i)\in \F_p[x_0,\ldots,x_i]$, is called a {\em maximal split ideal} of $f(x)\bmod p^k$ if,

\begin{enumerate}

\item  the number of common zeros of $h_0,\ldots,h_l$ in ${\F_p}^{l+1}$ is $\prod_{i=0}^{l} \deg_{x_i}(h_i)$, where $\deg_{x_i}$ denotes the individual degree wrt $x_i$, and,

\item  for every common zero $(a_0,\ldots,a_l)\in {\F_p}^{l+1}$ of $h_0,\ldots,h_l$; $f(x)$ vanishes identically modulo $p^k$ with the substitution $x\to a_0+p a_1+\ldots+p^l a_l+p^{l+1}x$ but not with $x\to a_0+\ldots+p^{l-1} a_{l-1}+p^{l}x$.

\end{enumerate}

\end{definition} 

For an MSI $I$ given by its generators $h_0(x_0),\ldots,h_l(x_0,\ldots,x_l)$ we define its {\em length} to be $l+1$ and {\em degree}, denoted as $\deg(I)$, to be the number of common zeros of its generators which is $\prod_{i=0}^{l} \deg_{x_i}(h_i)$ by definition. 

Essentially, $I$ is encoding some representative-roots of $f\bmod p^k$ in the form of common roots of its generators. Indeed, Condition (2) of the definition is similar to that of representative-roots. If $(a_0,\ldots,a_l)$ is a common zero of the generators then by Condition (2), $a_0+p a_1+\ldots+p^l a_l+p^{l+1}*$ follows all the conditions to be a representative-root. It is apparent then :

\begin{lemma}{\cite[Lem.~6 \& 8]{dwivedi2019counting}}
\label{lemma-MSI-rep-root}
The length of an MSI $I$ is the length of each representative-root encoded by it and the degree of $I$ is the count on them. Thus, we get the count on the roots of $f\bmod p^k$ encoded by $I$ as ${\prod_{i=0}^{l} \deg_{x_i}(h_i)}\times p^{k-l-1}$.

\end{lemma}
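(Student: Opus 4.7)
The statement is essentially a dictionary between the two languages: representative-roots (from Definition \ref{def-rep-root}) and MSIs (from Definition \ref{def-MSI}). My plan is a direct unpacking of the two definitions, followed by a counting argument.

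First, I would fix an MSI $I=\langle h_0(x_0),\ldots,h_l(x_0,\ldots,x_l)\rangle$ and associate to every common zero $(a_0,\ldots,a_l)\in\F_p^{l+1}$ of its generators the candidate set $\mb{a}:=a_0+pa_1+\cdots+p^la_l+p^{l+1}*$. I would then verify that $\mb{a}$ satisfies both clauses of Definition \ref{def-rep-root} using Condition (2) of Definition \ref{def-MSI}. The first clause (every element of $\mb{a}$ is a root of $f\bmod p^k$) is immediate from the fact that $f(a_0+pa_1+\cdots+p^la_l+p^{l+1}x)\equiv 0\bmod p^k$ as a polynomial in $x$: specializing $x$ to any element of $\Z/\langle p^{k-l-1}\rangle$ produces a root. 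The second clause (not all elements of the shorter prefix $a_0+\cdots+p^{l-1}a_{l-1}+p^l*$ are roots) is exactly the second half of Condition (2) of Definition \ref{def-MSI}. Thus $\mb{a}$ is a representative-root of $f\bmod p^k$ of length $l+1$, matching the length of $I$.

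Next I would argue that distinct common zeros give distinct representative-roots: since two distinct tuples $(a_0,\ldots,a_l)\ne(a_0',\ldots,a_l')$ in $\F_p^{l+1}$ produce representatives whose defining digit sequences in base-$p$ differ, the resulting sets $\mb{a}$ and $\mb{a}'$ are disjoint (they differ modulo $p^{l+1}$). Combined with Condition (1) of Definition \ref{def-MSI}, this shows the MSI encodes exactly $\deg(I)=\prod_{i=0}^{l}\deg_{x_i}(h_i)$ distinct representative-roots, each of length $l+1$.

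Finally, to count actual roots of $f\bmod p^k$, I would use that a representative-root $a_0+pa_1+\cdots+p^la_l+p^{l+1}*$, viewed as a subset of $\Z/\langle p^k\rangle$, contains exactly $p^{k-l-1}$ elements (the wildcard $*$ ranges over $\Z/\langle p^{k-l-1}\rangle$). Multiplying by the number of representative-roots gives $\prod_{i=0}^{l}\deg_{x_i}(h_i)\cdot p^{k-l-1}$ roots total, as claimed.

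There is essentially no conceptual obstacle here; the proof is a careful definition chase. The only mild subtlety is ensuring that representative-roots arising from distinct common zeros do not overlap and that the "not all roots" condition in Definition \ref{def-rep-root} is interpreted modulo $p^k$ in the same way as Condition (2) of Definition \ref{def-MSI}; once both are read as statements about identical vanishing of $f$ after a digit-substitution, the correspondence is transparent.
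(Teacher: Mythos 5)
Your proof is correct and follows essentially the same route the paper sketches in the paragraph immediately preceding the lemma (the paper itself defers the formal proof to \cite[Lem.~6 \& 8]{dwivedi2019counting}): read Condition (2) of Definition~\ref{def-MSI} as Definition~\ref{def-rep-root} for each common zero, use Condition (1) to count the representative-roots, note disjointness, and multiply by $p^{k-l-1}$. Nothing is missing; this is the intended definition chase.
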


We state the result of \cite{dwivedi2019counting} which returns all the representative-roots, in MSI form, in deterministic polynomial time.

\begin{theorem}{\em (Compute $N_k(f)$ \cite{dwivedi2019counting})}
\label{thm-dms}
In deterministic poly($|f|,k\log p$)-time one gets the maximal split ideals which collectively contain exactly the representative-roots of a univariate polynomial $f(x)\in \Z[x]$ modulo prime-power $p^k$. 

Moreover, using Lemma \ref{lemma-MSI-rep-root} we can count them, and all the roots of $f\bmod p^k$, in deterministic poly-time.
\end{theorem}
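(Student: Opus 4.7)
My plan is to build the maximal split ideals by a digit-by-digit Hensel-style lifting that runs for at most $k$ levels but maintains, at each level, only the symbolic triangular system rather than expanding over all common zeros of $h_0, \ldots, h_i$. First, I would reduce $f$ modulo $p$ and let $h_0(x_0)$ be the squarefree part of $\gcd(f(x_0), x_0^p - x_0) \bmod p$; this isolates exactly the $\F_p$-roots of $f \bmod p$ and can be produced deterministically in $\poly(d, \log p)$ time by standard gcd and squarefree-decomposition over $\F_p$, bypassing the (not known to be deterministic) full factoring of $f \bmod p$.

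The recursive step at depth $i$ considers the substitution $x \mapsto a_0 + p a_1 + \cdots + p^i a_i + p^{i+1} x_{i+1}$ inside $f(x) \bmod p^k$, where $(a_0,\ldots,a_i)$ is treated as a \emph{symbolic} common zero of $h_0,\ldots,h_i$. The result is a polynomial in $x_{i+1}$ whose coefficients live in $R_i := \F_p[a_0,\ldots,a_i]/\langle h_0,\ldots,h_i\rangle$-valued liftings modulo $p^k$. I would reduce these coefficients modulo the triangular ideal to keep their size bounded, then read off the smallest $p$-adic valuation $v$ that appears symbolically. If $v \ge k$, the tuple already forces $f$ to vanish and I output $\langle h_0,\ldots,h_i\rangle$ as a final MSI of length $i+1$ (matching Condition (2) of Definition \ref{def-MSI}). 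Otherwise I divide out $p^v$, reduce modulo $p$, and let $h_{i+1}(x_0,\ldots,x_{i+1})$ be the squarefree radical of the resulting polynomial in $x_{i+1}$ over $R_i$ that captures the $\F_p$-roots in the last slot.

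The main obstacle is that $R_i$ is not a field in general, so gcd and squarefreeness break when zero-divisors appear. I would handle this by the standard ``Chinese Remainder splitting'' trick: whenever a pivot in a gcd-like computation is a zero-divisor, factor the offending generator into two coprime factors and split the current MSI into two sub-MSIs, continuing independently on each branch. Condition (1) of Definition \ref{def-MSI} (number of common zeros equals $\prod \deg_{x_j}(h_j)$) is exactly what is preserved by such splits, and the total product across all surviving branches is bounded by $\deg(f) = d$ throughout the execution, in line with Proposition \ref{prop-root-set}. This both keeps the number of branches $\le d$ and ensures that no branch is ever ``wasted'' on spurious tuples.

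For termination and complexity I would argue that the recursion depth is at most $k$, but the useful depth is bounded by the largest relevant $p$-adic valuation on $f$ (polynomial in $d, \log C$, as used elsewhere in the paper). Each level performs $\poly(d, k, \log p)$ arithmetic steps in $R_i$ via triangular reduction, yielding the claimed overall $\poly(|f|, k\log p)$ bound. Finally, Lemma \ref{lemma-MSI-rep-root} converts the explicit numbers $l$ and $\prod_{j=0}^{l}\deg_{x_j}(h_j)$ attached to each output MSI directly into counts of representative-roots and hence of roots of $f \bmod p^k$, without ever enumerating the individual common zeros of the $h_j$'s.
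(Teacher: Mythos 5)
The paper does not prove Theorem \ref{thm-dms}; it imports the result and the maximal-split-ideal machinery wholesale from \cite{dwivedi2019counting}. Your sketch is a faithful reconstruction of that algorithm's main ideas: digit-by-digit symbolic Hensel lifting over a triangular ideal, Chinese-Remainder splitting whenever a pivot in a gcd-style step becomes a zero-divisor in $R_i$, the invariant that $\sum_I \deg(I)$ over all live branches stays at most $d$, and the final read-off of counts via Lemma \ref{lemma-MSI-rep-root}. Two small corrections are worth making. First, before the base step you must divide $f$ by its $p$-content, and more generally the minimal valuation $v$ of the shifted polynomial is itself a function on the common zeros of $\langle h_0,\ldots,h_i\rangle$ and can differ across them, so extracting $v$ is one more place a split may be forced (not only the gcd pivots). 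Second, the remark that the ``useful depth'' is bounded by a $p$-adic valuation of $f$ (hence by $\poly(d,\log C)$) is not right: by Theorem \ref{thm-root}, a simple $\Z_p$-root $\alpha_i$ (multiplicity $e_i=1$) yields a representative-root of length $k-\nu_i$, so the recursion genuinely runs to depth roughly $k$. This costs nothing, since the claimed $\poly(|f|,k\log p)$ bound already allows depth $k$; the correct justification is simply depth at most $k$ together with the degree invariant keeping the branch count polynomial.
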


\subsection{Some definitions and notation related to $f$}
\label{sec-def}


We are given an integral univariate polynomial $f(x)\in \Z[x]$ of degree $d$ with coefficients magnitude at most $C\in \N$, and a prime $p$. Then, $f$ can also be thought of as an element of $\Z_p[x]$ (as $\Z\subseteq \Z_p$), where $\Z_p$ is the {\em ring of integers of $p$-adic rational numbers $\Q_p$}. In such a field $\Q_p$ (called non-archimedean {\em local} field) there exists a valuation function $v_p:\Q_p\to \Z\cup \{\infty\}$. Formally, the {\em valuation $v_p(a)$} of $a\in \Z_p$ ($\Z_p$ is a UFD) is defined to be the highest power of $p$  dividing $a$, when $a\neq 0$, and $\infty$ when $a=0$. This definition extends to the rationals $\Q_p$ naturally as $v_p(a/b):=v_p(a)-v_p(b)$, where $b\neq 0$ and $a,b\in \Z_p$ (see \cite{koblitz1977p} as reference text). 

Now we define the factors of $f$ in $\Z_p[x]$ as follows (note: we do not require $f$ to be monic).

\begin{definition}
\label{def-alphai-gj}
Let the $p$-adic integral factorization of $f$, into coprime irreducible factors, be 
$$f(x) \;=:\;  \prod_{i\in[n]}(x-\alpha_i)^{e_i} \cdot \prod_{j=1}^{m} g_j(x)^{t_j}$$
where each $\alpha_i$ is a $\Z_p$-root of $f$ with {\em multiplicity} $e_i$. Each $g_j(x)\in \Z_p[x]$ has multiplicity $t_j$; it is irreducible over $\Z_p$ and has no $\Z_p$-root. 
\end{definition}

For example, over $\Z_2$, $f = 2x^2+3x+1 = (x+1)\cdot (2x+1)$ has $n=m=1$.

\begin{definition}
\label{def-fi-vi}
For each $i\in [n]$, we define $f_i(x)\in \Z_p[x]$, called {\em $\alpha_i$-free part of $f$}, as $f_i(x):= f(x)/(x-\alpha_{i})^{e_i}$. We denote {\em valuation} $v_p(f_i(\alpha_i))$ as $\nu_i$, for all $i\in [n]$.
\end{definition}

\noindent
\textbf{Radical} of a univariate polynomial $h(x)$ over a field $\F$ is defined to be the univariate polynomial, denoted by $\rad(h)$, which is the product of coprime irreducible factors of $h$. This gives rise to following definition.

\begin{definition}
\label{def-rad}
Define $\rad(f):=( \prod_{i=1}^{n}(x-\alpha_i)) \cdot( \prod_{j=1}^{m} g_j(x))$. Analogously, the radical of $f_i$, for each $i\in [n]$, is defined as $\rad(f_i):= \rad(f)/(x-\alpha_i)$.
\end{definition}

\noindent
\textbf{Discriminant} of a polynomial $h(x)\in \F[x]$ is defined as $ D(h):= h_m^{2m-1}\cdot\prod_{1\le i<j\le m} (r_i-r_j)^2 $, where $\F$ is a field, $r_i$'s are the roots of $h(x)$ over the algebraic closure $\bar{\F}$, degree of $h$ is $m$, and $h_m$ is its leading coefficient.

\smallskip
The discriminant $D(h)$ is an element of $\F$. It is clear by the definition: all the roots of $h$ are distinct iff $D(h)\neq 0 $.
Eg.~discriminant of radical is nonzero.

\begin{definition}
\label{def-delta}
We denote by $\Delta$, the valuation with respect to $p$ of the discriminant of radical of $f$, i.e, $\Delta:= v_p(D(\rad(f)))$.
\end{definition}

We see that $\Delta$ must be finite, since roots of $\rad(f)$ are distinct. The following fact is easily established by the definition of discriminant and the fact that $\alpha_1,\ldots,\alpha_n$ are also roots of $\rad(f)$.

\begin{fact}
\label{fact-delta}
For $i\neq j \in [n]$, we have $v_p(\alpha_i-\alpha_j)\leq \Delta/2< \infty$.
\end{fact}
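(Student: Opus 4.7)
The plan is to factor out the monic polynomial $(x-\alpha_i)(x-\alpha_j)$ from $\rad(f)$ inside $\Z_p[x]$, and then invoke the standard multiplicativity of the discriminant under products, so as to isolate $(\alpha_i-\alpha_j)^2$ as the only factor of $D(\rad(f))$ that can possibly contribute negative $p$-adic valuation.

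Concretely, I would first observe that since $\alpha_i, \alpha_j \in \Z_p$ are distinct (they index different irreducible factors in Definition \ref{def-alphai-gj}) and $\rad(f) \in \Z_p[x]$, monic Euclidean division inside $\Z_p[x]$ gives $\rad(f)(x) = (x-\alpha_i)(x-\alpha_j)\cdot r(x)$ with $r(x) \in \Z_p[x]$. Next, I would apply the identity $D(AB) = D(A)\cdot D(B)\cdot \text{Res}(A,B)^2$, which is an immediate consequence of the product form of $D$, twice. Using $D(x-\alpha)=1$ and $\text{Res}(x-\alpha,g)=g(\alpha)$ for the two linear factors, this yields
$$ D(\rad(f)) \;=\; D(r)\cdot r(\alpha_i)^2\cdot r(\alpha_j)^2\cdot (\alpha_i-\alpha_j)^2. $$
Taking $p$-adic valuations of both sides gives $\Delta = v_p(D(r)) + 2v_p(r(\alpha_i)) + 2v_p(r(\alpha_j)) + 2v_p(\alpha_i-\alpha_j)$. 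Since $r\in\Z_p[x]$, each of $D(r), r(\alpha_i), r(\alpha_j)$ lies in $\Z_p$ and thus has non-negative valuation, so $\Delta \ge 2v_p(\alpha_i-\alpha_j)$, which is exactly the stated inequality.

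For finiteness of $\Delta$, I would note that the irreducible factors listed in Definition \ref{def-alphai-gj} are pairwise coprime, so $\rad(f)$ has pairwise distinct roots (in $\overline{\Q_p}$) and hence $D(\rad(f))\ne 0$, making $\Delta < \infty$. There is no serious obstacle in this plan; the only thing to verify carefully is that the multiplicativity identity $D(AB) = D(A)\cdot D(B)\cdot \text{Res}(A,B)^2$ holds under the paper's convention $D(h) = h_m^{2m-1}\prod_{i<j}(r_i-r_j)^2$, which it does, because the exponents on the leading coefficients of $A$ and $B$ telescope exactly when combined with those appearing inside $\text{Res}(A,B)^2$.
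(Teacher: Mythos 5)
Your proof is correct and is essentially the argument the paper leaves implicit: since $\alpha_i,\alpha_j$ are roots of $\rad(f)$, the factor $(\alpha_i-\alpha_j)^2$ appears inside $D(\rad(f))$ multiplied only by $\Z_p$-integers, giving $\Delta\ge 2\,v_p(\alpha_i-\alpha_j)$. The paper's listed discriminant Property~1 (over $\Z_p$, $u\mid w$ implies $v_p(D(u))\le v_p(D(w))$) applied to $u=(x-\alpha_i)(x-\alpha_j)$ with $D(u)=(\alpha_i-\alpha_j)^2$ yields the bound in one line, whereas you rederive it directly from $D(AB)=D(A)\,D(B)\,\mathrm{Res}(A,B)^2$ — a fine route, and your observation that $D(r),\,r(\alpha_i),\,r(\alpha_j)$ are nonzero elements of $\Z_p$ (because $\rad(f)$, hence $r$, is squarefree and $\alpha_i,\alpha_j$ are not roots of $r$) is exactly the detail needed to conclude their valuations are finite and non-negative.
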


For our algorithm, $\Delta$ will be crucial in informing us about the behavior of the roots of $f\bmod p^k$.

\smallskip\noindent
\textbf{Properties of discriminant:}\label{disc-property}
\begin{enumerate}
\item  
Over $\Z_p$, if $u(x)| w(x)$ then $D(u)\mid D(w)$ and $v_p(D(u))\leq v_p(D(w))$.

\item
Discriminant of a linear polynomial is defined to be $1$.

\item
 If $w(x)=(x-a)\cdot u(x)$ then by the definition of discriminant, it is clear that $D(w)=D(u)\cdot u(a)^2$.

\item
 Discriminant $D(h)$ of a degree-$l$ univariate polynomial $h(x):= h_l x^l+\ldots+h_1 x+h_0$, over $\Z_p$, is also a multivariate polynomial over $\Z_p$ in the coefficients $h_0,\ldots,h_l$ (see \cite[Ch.1]{lidl1994introduction}). Moreover, it is computable in time polynomial in size of given $h$ (eg.~using determinant of a Sylvester matrix \cite[Ch.11, Sec.2]{von2013modern}).
\end{enumerate}


\section{Proof of main results}

\subsection{Interplay of $\Z_p$-roots and $\left(\Z/\langle p^k\rangle\right)$-roots}
\label{sec-main-1}

In this section we will establish a connection between $\left(\Z/\langle p^k\rangle\right)$-roots and $\Z_p$-roots of the given $f$, when $k$ is sufficiently large i.e, $k>d\Delta$ (see Sec.~\ref{sec-def} for the related notation).

Recall that $\alpha_1,\ldots,\alpha_n$ are the distinct $\Z_p$-roots of $f$ (Defn.~\ref{def-alphai-gj}). The following claim establishes a notion of `closeness' of any $\bar{\alpha}\in \Z_p$ to an $\alpha_j$. Later we will apply this to a representative-root $\bar{\alpha}$.

\begin{claim}[Close to a root]
\label{claim3}
For some $j\in [n], \bar{\alpha}\in \Z_p$, if $v_p(\bar{\alpha}-\alpha_j)>\Delta/2$,  then $v_p(\bar{\alpha}-\alpha_i)=$ $v_p(\alpha_j-\alpha_i)\leq \Delta/2$, for all $i\neq j, i\in [n]$.
\end{claim}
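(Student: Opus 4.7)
The plan is to combine the hypothesis with the triangle identity $\bar{\alpha}-\alpha_i = (\bar{\alpha}-\alpha_j) + (\alpha_j-\alpha_i)$, and then apply the ultrametric (non-archimedean) property of the $p$-adic valuation on $\Q_p$: for any $x,y\in\Q_p$, $v_p(x+y)\geq\min(v_p(x),v_p(y))$, with equality whenever $v_p(x)\neq v_p(y)$.

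First, I would recall Fact~\ref{fact-delta}, which says $v_p(\alpha_j-\alpha_i)\leq \Delta/2$ for every $i\neq j$ in $[n]$, since $\alpha_j-\alpha_i$ divides the discriminant of $\mathrm{rad}(f)$ up to sign and squaring. Next, I would combine this with the hypothesis $v_p(\bar{\alpha}-\alpha_j)>\Delta/2$ to conclude the strict inequality
\[
v_p(\bar{\alpha}-\alpha_j) \;>\; \Delta/2 \;\geq\; v_p(\alpha_j-\alpha_i),
\]
so the two valuations appearing in the decomposition $\bar{\alpha}-\alpha_i=(\bar{\alpha}-\alpha_j)+(\alpha_j-\alpha_i)$ are distinct.

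Finally, invoking the equality case of the ultrametric inequality with the two strictly unequal valuations, I would obtain
\[
v_p(\bar{\alpha}-\alpha_i) \;=\; \min\!\bigl(v_p(\bar{\alpha}-\alpha_j),\, v_p(\alpha_j-\alpha_i)\bigr) \;=\; v_p(\alpha_j-\alpha_i),
\]
and the right-hand side is $\leq \Delta/2$ by Fact~\ref{fact-delta}, which is exactly the conclusion of the claim.

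I do not expect any real obstacle here; the statement is essentially a one-line consequence of the strict-inequality case of the ultrametric property, once the bound from Fact~\ref{fact-delta} is in hand. The only subtlety worth being explicit about is that $\bar{\alpha},\alpha_i,\alpha_j$ all lie in $\Z_p$ (or its unramified extension containing the roots), so that $v_p$ is well-defined on each difference; this is already guaranteed by Definition~\ref{def-alphai-gj}.
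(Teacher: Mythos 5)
Your proof is correct and follows essentially the same approach as the paper: both decompose $\bar{\alpha}-\alpha_i=(\bar{\alpha}-\alpha_j)+(\alpha_j-\alpha_i)$, invoke Fact~\ref{fact-delta} to get $v_p(\alpha_j-\alpha_i)\leq\Delta/2$, and apply the strict-inequality case of the ultrametric property. Your version is slightly more explicit about why the two valuations differ, but the argument is identical.
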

\begin{proof}
$v_p(\bar{\alpha}-\alpha_i)=$ $v_p(\bar{\alpha}-\alpha_j+\alpha_j-\alpha_i)$. Since $v_p(\bar{\alpha}-\alpha_j)>\Delta/2$ and $v_p(\alpha_j-\alpha_i)\leq \Delta/2$ (by Fact \ref{fact-delta}), we deduce, $v_p(\bar{\alpha}-\alpha_i)=$ min$\{  v_p(\bar{\alpha}-\alpha_j),v_p(\alpha_j-\alpha_i)\}=v_p(\alpha_j-\alpha_i)\leq \Delta/2$.
\end{proof}

The following lemma says that an irreducible can not take values with ever increasing valuation.

\begin{lemma}[Valuation of an irreducible]
\label{lemma-irred}
Let $h(x)\in \Z_p[x]$ be a polynomial with no $\Z_p$-root, and discriminant $D(h)\ne0$. Then, for any $\bar{\alpha}\in \Z_p$ , $v_p(h(\bar{\alpha}))\leq$ $v_p(D(h))$ . 
\end{lemma}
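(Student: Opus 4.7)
The plan is to pass to an algebraic closure and argue via Galois symmetry combined with the ultrametric inequality. Extend $v_p$ uniquely to a finite splitting field $L$ of $h$ over $\Q_p$, and factor $h(x) = c\prod_{i=1}^m (x - r_i) \in L[x]$ with the $r_i$ pairwise distinct (thanks to $D(h)\ne 0$). The target inequality then takes the form
\[
v_p(c) + \sum_{i=1}^m v_p(\bar\alpha - r_i) \;\le\; (2m-2)\,v_p(c) \;+\; 2\sum_{i<j} v_p(r_i - r_j).
\]

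The core step is an orbit-by-orbit comparison. Partition the roots into orbits $O$ under $\mathrm{Gal}(L/\Q_p)$. Any such automorphism $\sigma$ fixes $\bar\alpha \in \Z_p$ pointwise, so $v_p(\bar\alpha - r) = v_p(\sigma(\bar\alpha - r))$ is constant as $r$ varies over $O$; call this common value $\beta_O$. Applying the ultrametric identity to $r - r' = (r - \bar\alpha) + (\bar\alpha - r')$ yields $v_p(r - r') \ge \beta_O$ for any two distinct $r,r' \in O$. In the monic case every $r_i$ is an algebraic integer, hence $\beta_O \ge 0$, and the ``no $\Z_p$-root'' hypothesis rules out singleton orbits: a singleton $\{r\}$ forces $r \in \Q_p$, and integrality then forces $r \in \Z_p$, a contradiction. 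So each orbit has size $s \ge 2$. Summing contributions: an orbit of size $s$ contributes $s\beta_O$ to the LHS of the target inequality and at least $2\binom{s}{2}\beta_O = s(s-1)\beta_O \ge s\beta_O$ to the RHS; cross-orbit terms $v_p(r_i - r_j)$ are non-negative (again by integrality) and only help. This proves the lemma cleanly for monic $h$.

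The main obstacle I foresee is the non-monic case with $v_p(c) > 0$: some roots $r_i$ may then lie in $\Q_p \setminus \Z_p$ with negative valuation, producing singleton orbits with $\beta_O < 0$ that the orbit argument does not absorb on its own. The cleanest route here is to reduce to the primitive/monic case by invoking Gauss's lemma and tracking how the $v_p(c)$ factor is consumed on both sides, using the extra $(2m-2)v_p(c)$ slack on the RHS to swallow the negative-$\beta_O$ deficit; alternatively, one can combine the contrapositive of Hensel's lemma (\emph{if $h$ has no $\Z_p$-root, then $v_p(h(\bar\alpha)) \le 2 v_p(h'(\bar\alpha))$ for every $\bar\alpha \in \Z_p$}) with the resultant identity $D(h) = (-1)^{m(m-1)/2} c^{m-2}\prod_i h'(r_i)$ to finish. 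In the context of this paper the relevant $h$ can always be taken primitive after clearing units, so this obstacle is a bookkeeping issue rather than a genuine conceptual one.
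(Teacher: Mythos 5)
Your monic-case argument is correct and follows a genuinely different route from the paper's. The paper argues by contradiction and stays local: assuming $v_p(h(\bar\alpha)) =: \delta > v_p(D(h))$, it writes $h(x) = (x-\bar\alpha)h_1(x) + p^{\delta}h_2(x)$, uses the congruence $D(h) \equiv D(h_1)\,h'(\bar\alpha)^2 \bmod p^{\delta}$ to deduce $2\,v_p(h'(\bar\alpha)) \le v_p(D(h)) < v_p(h(\bar\alpha))$, and then invokes Hensel lifting to produce a $\Z_p$-root of $h$, a contradiction. That argument is insensitive to whether $h$ is monic. Your argument is global and Galois-theoretic: pass to a splitting field, observe that $v_p(\bar\alpha - r)$ is constant on each $\mathrm{Gal}(L/\Q_p)$-orbit because $\bar\alpha$ is fixed, apply the ultrametric inequality within each orbit, rule out singleton orbits via integrality, and compare orbit-by-orbit. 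When $h$ is monic this is clean and correct, and it is a nice alternative that makes the role of the ``no $\Z_p$-root'' hypothesis transparent.

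The non-monic case, however, is a genuine gap rather than bookkeeping, and it actually occurs in the paper: the example in Section~\ref{sec-def}, $f = 2x^2+3x+1$ over $\Z_2$, produces the non-monic irreducible factor $g_1 = 2x+1$ with root $-1/2\notin\Z_2$; a higher-degree instance is $4x^2+2x+1$ over $\Z_2$, irreducible with both roots of valuation $-1$. Your orbit comparison breaks in several ways simultaneously once roots may have negative valuation: the ``no $\Z_p$-root'' hypothesis does not exclude singleton orbits in $\Q_p\setminus\Z_p$; the step $s(s-1)\beta_O \ge s\beta_O$ reverses when $\beta_O<0$ and $s\ge 3$; and cross-orbit terms $v_p(r_i-r_j)$ can be negative. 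Route 1 does not repair this, since $2x+1$ is already primitive yet non-monic, so ``taking $h$ primitive'' does not remove the phenomenon. Route 2 gestures at the paper's own mechanism but leaves the key step unsupplied: the Hensel contrapositive bounds $v_p(h(\bar\alpha))$ by $2v_p(h'(\bar\alpha))$ at the point $\bar\alpha$, while the identity $D(h)=\pm c^{m-2}\prod_i h'(r_i)$ controls $v_p(h'(r_i))$ at the roots $r_i$; connecting $h'(\bar\alpha)$ to $D(h)$ is precisely what the paper's congruence $D(h)\equiv D(h_1)h'(\bar\alpha)^2\bmod p^{\delta}$ accomplishes, and your sketch does not provide a substitute. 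So the monic half is a valid alternative proof, but as written the proposal does not establish the lemma.
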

\begin{proof}
We give the proof by contradiction i.e, we show that if $v_p(h(\bar{\alpha}))> v_p(D(h))$, then $h(x)$ has a root in $\Z_p$.

Define $v_p(D(h))=: d(h)$. Let $\bar{\alpha}\in \Z_p$ such that $h(\bar{\alpha})\equiv 0 \bmod p^{\delta}$, for $\delta>d(h)$. Then we write, $h(x)=(x-\bar{\alpha})\cdot h_1(x) \;+\; p^{\delta}\cdot h_2(x)$. The two things to note here are:

(1). $D(h)\equiv D(h\bmod p^{\delta})\bmod p^{\delta}$ by discriminant's Property $(4)$ in Sec.\ref{disc-property}. Also, $D(h)\neq 0$ is given.

(2). Let $h'(x)$ be the first derivative of $h(x)$ and let $i:=v_p(h'(\bar{\alpha}))$. Then, we claim that $\delta>d(h)\geq 2i$. 

Consider $h'(x)= h_1(x)+(x-\bar{\alpha}){h}_1'(x)+p^{\delta}{h'}_2(x)$. So, $h'(\bar{\alpha})\equiv$ $h_1(\bar{\alpha})\bmod p^{\delta}$.
By Property (3) (Sec.~\ref{sec-def}) of discriminants, $D(h)\equiv D((x-\bar{\alpha})\cdot h_1(x)) \equiv D(h_1)\cdot h_1(\bar{\alpha})^2 \equiv D(h_1)\cdot h'(\bar{\alpha})^2 \bmod p^{\delta}$. Togetherwith $D(h)\ne0 \bmod p^{\delta}$, we deduce, $2i\leq d(h)< \delta$.

\smallskip
Now, we show that the root $\bar{\alpha}$ of $h\bmod p^{\delta}$ {\em lifts} to roots of $h$ mod $p^{\delta+j}$, for all $j\in \Z^{+}$. This is due to Hensel's Lemma (see \cite[Ch.~15]{von2013modern}); for completeness we give the proof.

By Taylor expansion, we have $h(\bar{\alpha}+p^{\delta-i}x)=$ $h(\bar{\alpha})+ h'(\bar{\alpha})\cdot p^{\delta-i}x+ h''(\bar{\alpha})\cdot p^{2(\delta-i)}x^2/2! +\cdots $. 

Note that there exists a unique solution $x_0\equiv (-h(\bar{\alpha})/h'(\bar{\alpha})p^{\delta-i}) \bmod p$ : $h(\bar{\alpha}+p^{\delta-i}x_0)\equiv 0\bmod p^{\delta+1}$. This follows from the Taylor expansion and since $2(\delta-i)> \delta$.

So, $\bar{\alpha}-p^{\delta-i}(h(\bar{\alpha})/h'(\bar{\alpha})p^{\delta-i})\bmod p^{\delta+1}$ is a lift, of $\bar{\alpha} \bmod p^{\delta}$. By a similar reasoning, it can be lifted further to arbitrarily high powers  $p^{\delta+j}$. Thus, proving that $h(x)$ has a $\Z_p$-root; which is a contradiction.
\end{proof}

The following lemma is perhaps the most important one. It associates every root $\bar{\alpha}$ of $f(x) \bmod p^k$ to a unique $\Z_p$-root of $f$. Recall the notation from Section \ref{sec-def}.

\begin{lemma}[Unique association]
\label{lemma-uni-assoc}
Let $k>d(\Delta+1)$ and $\bar{\alpha}\in \Z_p$ be a root of $f(x)\bmod p^k$. There exists a unique $\alpha_i$ such that $v_p(\bar{\alpha}-\alpha_i)>\Delta+1$ and thus, $v_p(\bar{\alpha}-\alpha_i)>v_p(\alpha_i-\alpha_j)$, for all $j\neq i, j\in [n]$.
\end{lemma}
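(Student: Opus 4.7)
The plan is to prove existence by contradiction and then derive uniqueness (and the ``thus'' clause) directly from Claim~\ref{claim3}. Expanding with the $p$-adic factorization of Definition~\ref{def-alphai-gj},
\[
v_p(f(\bar\alpha)) \;=\; \sum_{i=1}^n e_i \, v_p(\bar\alpha - \alpha_i) \;+\; \sum_{j=1}^m t_j \, v_p(g_j(\bar\alpha)).
\]
Each $g_j \in \Z_p[x]$ is irreducible with no $\Z_p$-root and (since $\Q_p$ has characteristic zero) has nonzero discriminant, so Lemma~\ref{lemma-irred} gives $v_p(g_j(\bar\alpha)) \leq v_p(D(g_j))$; combined with Property~(1) of discriminants and $g_j \mid \rad(f)$, this forces the uniform bound $v_p(g_j(\bar\alpha)) \leq v_p(D(\rad(f))) = \Delta$.

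For the existence of $\alpha_i$, I would suppose toward contradiction that $v_p(\bar\alpha - \alpha_i) \leq \Delta + 1$ for every $i \in [n]$. Substituting into the display above,
\[
v_p(f(\bar\alpha)) \;\leq\; (\Delta+1)\sum_{i=1}^n e_i \;+\; \Delta \sum_{j=1}^m t_j \;\leq\; (\Delta+1)\Bigl(\sum_{i=1}^n e_i + \sum_{j=1}^m t_j \deg(g_j)\Bigr) \;=\; (\Delta+1)\, d,
\]
where the middle inequality uses $\deg(g_j) \geq 1$ and the last equality is $\deg(f) = d$. This contradicts the hypothesis $v_p(f(\bar\alpha)) \geq k > d(\Delta+1)$, so some index $i$ must satisfy $v_p(\bar\alpha - \alpha_i) > \Delta + 1$.

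Uniqueness and the final clause then follow at once from Claim~\ref{claim3}: since $\Delta \geq 0$ we have $v_p(\bar\alpha - \alpha_i) > \Delta + 1 > \Delta/2$, so the claim yields $v_p(\bar\alpha - \alpha_j) = v_p(\alpha_i - \alpha_j) \leq \Delta/2$ for every $j \neq i$. This rules out a second admissible index and, via Fact~\ref{fact-delta}, also gives the desired $v_p(\bar\alpha - \alpha_i) > v_p(\alpha_i - \alpha_j)$.

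The only delicate point is balancing the three ingredients---the $\Delta$-bound on irreducible factors from Lemma~\ref{lemma-irred}, the $\Delta/2$-bound on inter-root distances from Fact~\ref{fact-delta}, and the degree identity $\sum_i e_i + \sum_j t_j \deg(g_j) = d$---so that the contradiction consumes exactly the threshold $d(\Delta+1)$. Any weakening of Lemma~\ref{lemma-irred} beyond $\Delta$ would force a correspondingly larger $k_0$; conversely, the calculation shows the present hypothesis is essentially sharp for this first-principles argument.
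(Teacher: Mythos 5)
Your proof is correct and follows essentially the same argument as the paper: contradiction via bounding $v_p(f(\bar\alpha))$ using Lemma~\ref{lemma-irred} together with the discriminant properties to control $v_p(g_j(\bar\alpha))$, then the degree identity to land below $k$, with uniqueness and the final inequality read off from Claim~\ref{claim3} and Fact~\ref{fact-delta}. The only cosmetic difference is that you route the bound through $\sum_j t_j \deg(g_j)$ while the paper uses $\sum_j t_j$ directly (both give $\leq d(\Delta+1)$), and you make explicit the characteristic-zero remark ensuring $D(g_j)\ne 0$, which the paper leaves implicit.
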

\begin{proof}

Let us first prove that there exists some $i\in [n]$, given $\bar{\alpha}$, such that $v_p(\bar{\alpha}-\alpha_i)>\Delta+1$. For the sake of contradiction, assume that $v_p(\bar{\alpha}-\alpha_i)\leq \Delta+1$ for all $i\in [n]$. Then, by Definition \ref{def-alphai-gj},
 $v_p(f(\bar{\alpha}))=$ $\sum_{i=1}^{n} e_i\cdot v_p(\bar{\alpha}-\alpha_i)\; +$ $\sum_{j=1}^{m} t_j\cdot v_p(g_j(\bar{\alpha})) \le$
 $(\Delta+1)\cdot \sum_{i=1}^{n} e_i\; +$ $\sum_{j=1}^{m} t_j\cdot v_p(g_j(\bar{\alpha}))$ .

Since $g_j$ has no $\Z_p$-root, for all $j\in [m]$, by Lemma \ref{lemma-irred}, $v_p(g_j(\bar{\alpha}))\leq v_p(D(g_j))$. By the  properties given in Sec.~\ref{sec-def} we get: $v_p(D(g_j))\leq v_p(D(\rad(f)))=\Delta$; proving that $v_p(g_j(\bar{\alpha}))\leq \Delta$.

Going back, $v_p(f(\bar{\alpha}))\leq (\Delta+1)\cdot (\sum_{i=1}^{n}e_i + \sum_{j=1}^{m} t_j ) \leq$ $d(\Delta+1)< k$. It implies that $f(\bar{\alpha})\not\equiv 0\bmod p^k$; which contradicts the hypothesis that $\bar{\alpha}$ is a root of $f\bmod p^k$.

Thus, $\exists i\in [n]$, $v_p(\bar{\alpha}-\alpha_i)>\Delta+1$. The uniqueness of $i$ follows from Claim \ref{claim3}. 
\end{proof}

Having seen that every root $\bar{\alpha}$, of $f\bmod p^k$, is associated (or close) to a unique $\Z_p$-root $\alpha_i$, the following lemma tells us that the valuation of $\alpha_i$-free part of $f$ (resp.~factors of $f$ with no $\Z_p$-root) is the {\em same} on any $\bar{\alpha}$ close to $\alpha_i$. This unique valuation is important in getting an expression for $N_k(f)$.

\begin{lemma}[Unique valuation]
\label{lemma-uni-val}
Fix $i\in [n]$. Fix $\bar{\alpha}\in \Z_p$ such that $v_p(\bar{\alpha}-\alpha_i)>\Delta$. Recall $g_j(x), f_i$ from Section \ref{sec-def}. Then, 
\begin{enumerate}
\item $v_p(g_j(\bar{\alpha}))=v_p(g_j(\alpha_i))$, for all $j\in [m]$,

\item $v_p(f_i(\bar{\alpha}))=v_p(f_i(\alpha_i))$.
\end{enumerate}
\end{lemma}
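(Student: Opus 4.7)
The plan is to establish (1) using a Taylor-style expansion combined with the ultrametric inequality, and then to deduce (2) from (1) together with Claim \ref{claim3}.

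For part (1), I would start by writing $g_j(\bar{\alpha}) = g_j(\alpha_i) + (g_j(\bar{\alpha}) - g_j(\alpha_i))$. Since $g_j(x) - g_j(\alpha_i) \in \Z_p[x]$ vanishes at $x = \alpha_i$, it is divisible by $(x - \alpha_i)$ in $\Z_p[x]$, so there exists $h_j(x) \in \Z_p[x]$ with $g_j(\bar{\alpha}) - g_j(\alpha_i) = (\bar{\alpha} - \alpha_i) \cdot h_j(\bar{\alpha})$. Therefore
\[ v_p(g_j(\bar{\alpha}) - g_j(\alpha_i)) \;\ge\; v_p(\bar{\alpha} - \alpha_i) \;>\; \Delta. \]
Next I would bound $v_p(g_j(\alpha_i))$ from above. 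Since $g_j$ has no $\Z_p$-root and $\alpha_i \in \Z_p$, Lemma \ref{lemma-irred} applies and gives $v_p(g_j(\alpha_i)) \le v_p(D(g_j))$. Combined with $D(g_j) \mid D(\rad(f))$ (Property (1) of discriminants in Section \ref{sec-def}), we obtain $v_p(g_j(\alpha_i)) \le \Delta$. Thus $v_p(g_j(\alpha_i)) \le \Delta < v_p(g_j(\bar{\alpha}) - g_j(\alpha_i))$, and the ultrametric (non-archimedean) property of $v_p$ then forces
\[ v_p(g_j(\bar{\alpha})) \;=\; \min\bigl\{ v_p(g_j(\alpha_i)), \; v_p(g_j(\bar{\alpha}) - g_j(\alpha_i)) \bigr\} \;=\; v_p(g_j(\alpha_i)), \]
which is what (1) claims.

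For part (2), I would use the explicit factorization $f_i(x) = \prod_{l\in[n], l\ne i} (x-\alpha_l)^{e_l} \cdot \prod_{j=1}^{m} g_j(x)^{t_j}$ from Definitions \ref{def-alphai-gj} and \ref{def-fi-vi}, so
\[ v_p(f_i(\bar{\alpha})) \;=\; \sum_{l\ne i} e_l\,v_p(\bar{\alpha}-\alpha_l) \;+\; \sum_{j=1}^{m} t_j\,v_p(g_j(\bar{\alpha})). \]
Since $v_p(\bar\alpha-\alpha_i) > \Delta \ge \Delta/2$, Claim \ref{claim3} (applied with the role of $j$ there played by $i$) gives $v_p(\bar\alpha-\alpha_l) = v_p(\alpha_i-\alpha_l)$ for every $l\ne i$. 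Combining this with part (1) of the current lemma yields $v_p(f_i(\bar\alpha)) = \sum_{l\ne i} e_l\, v_p(\alpha_i-\alpha_l) + \sum_j t_j\, v_p(g_j(\alpha_i)) = v_p(f_i(\alpha_i))$, which finishes (2).

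The one subtle point, and really the only obstacle, is verifying the strict inequality $v_p(g_j(\alpha_i)) < v_p(g_j(\bar\alpha)-g_j(\alpha_i))$, since the ultrametric conclusion requires a strict gap. Here the hypothesis $v_p(\bar\alpha-\alpha_i) > \Delta$ (strictly greater than) is used in an essential way together with the upper bound $v_p(g_j(\alpha_i)) \le \Delta$ coming from Lemma \ref{lemma-irred}; both being tight simultaneously is exactly what the hypothesis rules out. Everything else is routine manipulation of valuations.
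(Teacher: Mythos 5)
Your proof is correct and follows the same overall strategy as the paper: bound $v_p(g_j(\alpha_i))\le\Delta$, observe $v_p(g_j(\bar\alpha)-g_j(\alpha_i))\ge v_p(\bar\alpha-\alpha_i)>\Delta$, conclude by the ultrametric inequality, and then deduce part (2) from part (1) together with Claim \ref{claim3} by expanding the factorization of $f_i$. The one place you diverge is in establishing the bound $v_p(g_j(\alpha_i))\le\Delta$: you route it through Lemma \ref{lemma-irred} (which relies on a Hensel-lifting argument) and then use Property (1) to get $v_p(D(g_j))\le v_p(D(\rad(f)))=\Delta$; the paper instead stays entirely within elementary discriminant bookkeeping, writing $\rad(f)=(x-\alpha_i)\cdot\rad(f_i)$, invoking Property (3) to get $D(\rad(f))=D(\rad(f_i))\cdot\rad(f_i)(\alpha_i)^2$ and hence $v_p(\rad(f_i)(\alpha_i))\le\Delta/2$, and then using $g_j\mid\rad(f_i)$ to conclude $v_p(g_j(\alpha_i))\le v_p(\rad(f_i)(\alpha_i))\le\Delta$. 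Both are valid (and indeed the paper itself uses your Lemma-\ref{lemma-irred}-based bound in the proof of Lemma \ref{lemma-uni-assoc}), but the paper's route for this particular lemma is slightly more elementary since it avoids re-invoking the Hensel machinery; the paper's version also delivers the sharper bound $\Delta/2$ in passing, though only $\le\Delta$ is needed here. Your remark about where strictness is used is exactly right and matches the paper's logic.
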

In other words, valuation with respect to $p$ of $f_i = f(x)/(x-\alpha_i)^{e_i}$, on $x\mapsto \bar{\alpha}$, is fixed uniquely to $\nu_i:= v_p(f_i(\alpha_i))$,  for any `close' approximation $\bar{\alpha}\in \Z_p$ of $\alpha_i$.
\begin{proof}
 
Since $g_j \mid \rad(f_{i})$ and $\rad(f_{i}) \mid \rad(f)$, we have by the properties of discriminants (Sec.~\ref{sec-def}): $v_p( g_j(\alpha_i) ) \leq v_p( \rad(f_{i})(\alpha_i) ) \leq \Delta$, for all $j\in [m]$.

Since $v_p(\bar{\alpha}-\alpha_i)>\Delta$, we deduce $v_p( g_j(\bar{\alpha}) - g_j(\alpha_i) ) > \Delta$. Furthermore, $v_p( g_j(\alpha_i) ) \leq \Delta$ implies: $v_p(g_j(\bar{\alpha})) = v_p(g_j(\alpha_i))$. This proves the first part.

By Claim \ref{claim3}, $v_p(\bar{\alpha}-\alpha_u) = v_p(\alpha_i-\alpha_u)$, for all $u\neq i, u\in [n]$. Also, by the first part, $v_p(g_w(\bar{\alpha})) = v_p(g_w(\alpha_i))$, for all $w\in [m]$. Consequently, $v_p(f_i(\bar{\alpha})) = \sum_{u=1, u\neq i}^{n} e_u\cdot  v_p(\alpha_i-\alpha_u) \;+$ $\sum_{w=1}^{m} t_w\cdot   
 v_p(g_w(\alpha_i)) =$ $v_p(f_i(\alpha_i))$. This proves the second part.
\end{proof}

\subsection{Representative-roots versus neighborhoods}
We now connect the $\Z_p$-roots of $f$ to the representative-roots (defined in Section \ref{sec-root-set}) of $f\bmod p^k$.  Later we characterize each representative-root as a `neighborhood' in Theorem \ref{thm-property}.

\begin{lemma}[Perturb a root]
\label{lemma-lift}
Let $k>d(\Delta+1)$ and $\bar{\alpha}$ be a root of $f(x)\bmod p^k$ with $l:=v_p(\alpha_i-\bar{\alpha}) > \Delta+1$, for some $i\in [n]$ (as in Lemma \ref{lemma-uni-assoc}). Then, every $\bar{\beta}\in \bar{\alpha}+p^l *$ is also a root of $f(x)\bmod p^k$.

\end{lemma}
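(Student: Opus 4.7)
The plan is to directly combine the factorization $f(x) = (x-\alpha_i)^{e_i} f_i(x)$ with the Unique Valuation Lemma (Lemma~\ref{lemma-uni-val}) to transport the vanishing from $\bar\alpha$ to any $\bar\beta$ in the coset $\bar\alpha + p^l *$. The key observation is that perturbing $\bar\alpha$ by a multiple of $p^l$ only \emph{increases} (never decreases) the valuation of the factor $(x-\alpha_i)^{e_i}$, while the valuation of the remaining part $f_i(x)$ is ``pinned'' to the constant $\nu_i$ throughout the entire neighborhood of $\alpha_i$.

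In detail, I would write an arbitrary $\bar\beta \in \bar\alpha + p^l *$ as $\bar\beta = \bar\alpha + p^l \gamma$ for some $\gamma\in\Z_p$ and split $f(\bar\beta) = (\bar\beta - \alpha_i)^{e_i} \cdot f_i(\bar\beta)$. Since $v_p(\bar\alpha - \alpha_i) = l$ and $v_p(p^l\gamma) \ge l$, the ultrametric inequality gives $v_p(\bar\beta - \alpha_i) \ge l > \Delta+1 > \Delta$, which is exactly the hypothesis needed to invoke Lemma~\ref{lemma-uni-val}. Applied to both $\bar\alpha$ and $\bar\beta$, it yields $v_p(f_i(\bar\beta)) = \nu_i = v_p(f_i(\bar\alpha))$.

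Putting this together,
\[
v_p(f(\bar\beta)) \;=\; e_i \cdot v_p(\bar\beta - \alpha_i) + \nu_i \;\ge\; e_i \cdot l + \nu_i \;=\; e_i \cdot v_p(\bar\alpha - \alpha_i) + v_p(f_i(\bar\alpha)) \;=\; v_p(f(\bar\alpha)) \;\ge\; k,
\]
where the last inequality uses the hypothesis that $\bar\alpha$ is a root of $f\bmod p^k$. Hence $\bar\beta$ is also a root of $f\bmod p^k$.

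There is no real obstacle here: once Lemmas~\ref{lemma-uni-assoc} and~\ref{lemma-uni-val} are in place, the proof is essentially a one-line valuation calculation. The only subtlety worth checking carefully is that the hypothesis $l > \Delta$ (needed to apply Lemma~\ref{lemma-uni-val}) is preserved under perturbation, which it is because perturbing by $p^l*$ can only preserve or increase the distance to $\alpha_i$; this is what guarantees that $f_i$ still takes valuation exactly $\nu_i$ at the perturbed point rather than something smaller.
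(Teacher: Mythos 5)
Your proof is correct and follows essentially the same route as the paper: both decompose $f = (x-\alpha_i)^{e_i} f_i$, use Lemma~\ref{lemma-uni-val} to pin $v_p(f_i(\cdot))$ to $\nu_i$ at both $\bar\alpha$ and $\bar\beta$, and observe that perturbing by $p^l *$ can only keep or increase $v_p(\cdot - \alpha_i)$, so $v_p(f(\bar\beta)) \ge v_p(f(\bar\alpha)) \ge k$.
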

\begin{proof}
Since $f(\bar{\alpha})\equiv 0\bmod p^k$, we have $v_p(f(\bar{\alpha}))\geq k$. Using Lemma \ref{lemma-uni-val} we have $v_p(f_i(\bar{\alpha}))=v_p(f_i(\alpha_i))=\nu_i$. Thus, $v_p(f(\bar{\alpha}))= v_p(\alpha_i-\bar{\alpha})\cdot e_i+ v_p(f_i(\bar{\alpha})) = v_p(\alpha_i-\bar{\alpha})\cdot e_i+\nu_i$ $\ge k$. 
%

Similarly, $v_p(f(\bar{\beta}))=v_p(\alpha_i-\bar{\beta})\cdot e_i+v_p(f_i(\bar{\beta})) = v_p(\alpha_i-\bar{\beta})\cdot e_i+\nu_i \geq v_p(\alpha_i-\bar{\alpha})\cdot e_i+\nu_i$. 
Last inequality follows from  $v_p(\alpha_i-\bar{\beta})\geq l = v_p(\alpha_i-\bar{\alpha})$ .

From the above two paragraphs we get, $v_p(f(\bar{\beta}))\ge k$. Hence, $f(\bar{\beta})\equiv 0 \bmod p^k$.
\end{proof}

Now we define a notion of `neighborhood' of a $\Z_p$-root of $f$.

\begin{definition}[Neighborhood]
\label{def-S_ki}
For $i\in [n]$, $k>d(\Delta+1)$, we define {\em neighborhood} $S_{k,i}$ of $\alpha_i$ mod $p^k$ to be the set of all those roots of $f\bmod p^k$, which are close to the $\Z_p$-root $\alpha_i$ of $f$. Formally,
$$S_{k,i} \;:=\; \{ \bar{\alpha}\in \Z/\langle p^k\rangle \mid v_p(\bar{\alpha}-\alpha_i)>\Delta+1, f(\bar{\alpha})\equiv 0\bmod p^k\} \;.$$
\end{definition}

The notion of representative-root was first given in \cite{dwivedi2019efficiently}. Below we discover its new properties which will lead us to the understanding of {\em length} of a representative-root; which in turn will give us the size of a neighborhood contributing to $N_k(f)$.

\begin{theorem}[Rep.root is a neighborhood]
\label{thm-property}
Let $k>d(\Delta+1)$ and $\mb{a}:=a_0+p a_1+p^2 a_2+\ldots+p^{l-1} a_{l-1}+p^{l}*$ be a representative-root of $f(x)\bmod p^k$. Define the $\Z_p$-root reduction $\bar{\alpha}_i:= \alpha_i \bmod p^k$, for all $i\in [n]$. Fix an $i\in[n]$, then,

\begin{enumerate}
\item Length of $\mb{a}$ is {\em large}. Formally, $l>\Delta+1$.

\item If $\bar{\alpha}_i\in \mb{a}$, then $\bar{\alpha}_j \not\in \mb{a}$ for all $j\neq i, j\in [n]$. (This means with Lemma \ref{lemma-uni-assoc}: $\mb{a}$ has a {\em uniquely associated} $\Z_p$-root.)

\item If $\mathbf{a}$ contains $\bar{\alpha}_i$ then it also contains the respective {\em neighborhood}. In fact, if $\bar{\alpha}_i\in \mb{a}$,  then $S_{k,i} = \mb{a}$.

\end{enumerate}
\end{theorem}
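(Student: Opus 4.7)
The plan is to introduce the auxiliary quantity $m_i := \lceil (k-\nu_i)/e_i\rceil$ for each $i\in[n]$ and, as a preliminary, show $m_i \ge \Delta+2$ under the hypothesis $k>d(\Delta+1)$. Expanding $\nu_i = \sum_{j\neq i} e_j\, v_p(\alpha_i-\alpha_j) + \sum_{l} t_l\, v_p(g_l(\alpha_i))$ and applying Fact~\ref{fact-delta} together with Lemma~\ref{lemma-irred} gives $\nu_i \le (d-e_i)\Delta$, whence $\nu_i + e_i(\Delta+1) \le d\Delta + e_i \le d(\Delta+1) < k$, so $(k-\nu_i)/e_i > \Delta+1$ and therefore $m_i\ge \Delta+2$. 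The workhorse of the whole argument is then the identity from Lemma~\ref{lemma-uni-val}: once $v_p(\bar{\beta}-\alpha_i)>\Delta$, we have $v_p(f(\bar{\beta})) = e_i\cdot v_p(\bar{\beta}-\alpha_i)+\nu_i$, so $\bar{\beta}$ is a root of $f\bmod p^k$ iff $v_p(\bar{\beta}-\alpha_i)\ge m_i$. Because $m_i \ge \Delta+2$, the extra condition $v_p(\bar{\beta}-\alpha_i) > \Delta+1$ is automatic, and $S_{k,i}$ therefore coincides with the representative $\alpha_i + p^{m_i}*$ of length $m_i$ inside $\Z/\langle p^k\rangle$.

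I would prove part~(1) by contradiction. Suppose $l\le\Delta+1$, pick any root $\bar{\alpha}\in\mathbf{a}$, and let $\alpha_i$ be the unique $\Z_p$-root given by Lemma~\ref{lemma-uni-assoc}, so $v_p(\bar{\alpha}-\alpha_i) > \Delta+1 \ge l$. Then $\alpha_i$ and $\bar{\alpha}$ agree in the first $l$ digits, which lets me identify $\mathbf{a}$, viewed in $\Z/\langle p^k\rangle$, with $\alpha_i + p^l*$. Now set $\bar{\beta} := \alpha_i + p^{m_i-1} \bmod p^k$: since $m_i-1\ge \Delta+1\ge l$, this element lies in $\mathbf{a}$; yet $v_p(\bar{\beta}-\alpha_i)=m_i-1>\Delta$, so the valuation formula of Paragraph~1 yields $v_p(f(\bar{\beta})) = e_i(m_i-1)+\nu_i < k$ by the very definition of $m_i$. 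Hence $\bar{\beta}$ is \emph{not} a root, contradicting the representative-root property that every element of $\mathbf{a}$ be a root. This proves $l>\Delta+1$. Part~(2) is then immediate: if $\bar{\alpha}_i,\bar{\alpha}_j\in\mathbf{a}$ for $i\neq j$, their first $l$ digits coincide, forcing $v_p(\alpha_i-\alpha_j)\ge l>\Delta/2$ in direct contradiction with Fact~\ref{fact-delta}.

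For part~(3), assume $\bar{\alpha}_i\in\mathbf{a}$, so once again $\mathbf{a} = \alpha_i + p^l*$ in $\Z/\langle p^k\rangle$; by part~(1), $l>\Delta+1$, so Lemma~\ref{lemma-uni-val} applies uniformly on $\mathbf{a}$ and on its parent representative $\mathbf{b}$. Two bounds on $l$ now fall out of the two defining clauses of a representative-root. Taking $\bar{\beta}=\alpha_i+p^l\in\mathbf{a}$ (so $v_p(\bar{\beta}-\alpha_i)=l>\Delta$), the clause ``every element of $\mathbf{a}$ is a root'' reads $e_i l + \nu_i \ge k$, i.e., $l\ge m_i$. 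Conversely, the clause ``$\mathbf{b}$ contains at least one non-root $\bar{\gamma}$'' combined with $v_p(\bar{\gamma}-\alpha_i)\ge l-1>\Delta$ gives, via the valuation formula, $e_i(l-1)+\nu_i < k$, i.e., $l\le m_i$. Combining, $l=m_i$, whence $\mathbf{a} = \alpha_i + p^{m_i}* = S_{k,i}$ by the characterisation of $S_{k,i}$ from Paragraph~1. The one technical pinch in the whole argument is securing the bound $m_i\ge\Delta+2$ from the hypothesis $k>d(\Delta+1)$; once that is in hand, everything else is a routine bookkeeping of $p$-adic valuations funneled through Lemma~\ref{lemma-uni-val}.
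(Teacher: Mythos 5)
Your proof is correct, and it follows a genuinely different organization than the paper's. The key difference is that you front-load the closed-form computation $l = m_i := \lceil(k-\nu_i)/e_i\rceil$ together with the characterization $S_{k,i} = \alpha_i + p^{m_i}*$ (after securing $m_i \ge \Delta+2$ via the bound $\nu_i \le (d-e_i)\Delta$), and then derive all three parts from this. The paper instead establishes Theorem~\ref{thm-property} without ever computing the length: Part~(1) takes two elements $\bar{\alpha}$ and $\bar{\alpha}+p^{\Delta+1}$ of $\mathbf{a}$ and argues that they would be forced to be close to \emph{distinct} $\Z_p$-roots $\alpha_s\ne\alpha_t$ with $v_p(\alpha_s-\alpha_t)=\Delta+1$, contradicting Fact~\ref{fact-delta}; Part~(3) shows $S_{k,i}\subseteq\mathbf{a}$ via Lemma~\ref{lemma-lift} and the minimality clause in Definition~\ref{def-rep-root}, then the easy reverse inclusion. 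The length formula $l=l_{i,k}$ is only established afterward, as a separate Theorem~\ref{thm-root}, using Theorem~\ref{thm-property} as input. Your route effectively proves Theorem~\ref{thm-root} and Theorem~\ref{thm-property} simultaneously, which is a real streamlining; the trade-off is that you absorb the "perturbation" argument of Lemma~\ref{lemma-lift} into a direct valuation calculation rather than isolating it as a reusable lemma. Both are sound; your version relies a bit more heavily on having the explicit bound $m_i\ge\Delta+2$ in hand (which you correctly derive, though the step $v_p(D(g_l))\le\Delta$ silently invokes the divisibility property of discriminants from Section~2.2), whereas the paper's Part~(1) argument is self-contained and does not need the length formula at all.
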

\begin{proof}

\begin{enumerate}
\item  Consider $\bar{\alpha}:= a_0+p a_1+\ldots+p^{l-1} a_{l-1}$. By Lemma \ref{lemma-uni-assoc}, there is a unique $s\in[n]$: $v_p(\bar{\alpha}-\alpha_s)> \Delta+1$. Suppose $l\leq \Delta+1$. Then, 
$v_p(\bar{\alpha}+p^{\Delta+1} -\alpha_s)= \Delta+1$. As, $\bar{\alpha}':= (\bar{\alpha}+p^{\Delta+1})$ is also in $\mb{a}$, it again has to be close to a unique $\alpha_t$, $s\ne t\in[n]$ : $v_p(\bar{\alpha}'-\alpha_t)> \Delta+1$.
In other words, $\alpha_s+p^{\Delta+1} \equiv \bar{\alpha}+p^{\Delta+1} \equiv \alpha_t \bmod p^{\Delta+2}$. Thus, $v_p(\alpha_s-\alpha_t)= \Delta+1> \Delta/2$; contradicting Fact \ref{fact-delta}. This proves $l> \Delta+1$.

\item Consider distinct $\bar{\alpha}_i, \bar{\alpha}_j \in\mb{a}$. Then, by the definition of $\mb{a}$, we have
$v_p(\bar{\alpha}_i-\bar{\alpha}_j) \ge l> \Delta+1> \Delta/2$; contradicting Fact \ref{fact-delta}. Thus, there is a unique $i$.

\item  Suppose there exists a neighborhood element $\bar{\beta}\not\in \mb{a}$, satisfying the conditions  $v_p(\alpha_i-\bar{\beta})>\Delta+1$ and $f(\bar{\beta})\equiv 0\bmod p^k$. 
Let $j$ be the index of the first coordinate where $\bar{\beta}$ and $\mb{a}$ differ; so, $j<l$ since $\bar{\beta}\not\in \mb{a}$.
Clearly, $j>\Delta+1$; otherwise, since $\bar{\alpha}_i\in \mb{a}$ and $\bar{\beta}\not\in \mb{a}$, we deduce $v_p(\alpha_i-\bar{\beta})= j \leq \Delta+1$; which is a contradiction. 

By $v_p(\alpha_i-\bar{\beta})=j>\Delta+1$ and Lemma \ref{lemma-lift}, we get: every element in $\bar{\beta}+p^{j}*$ is a root of $f(x)\bmod p^k$. Consequently, each element in  $a_0+p a_1+p^2 a_2+\ldots+p^{j-1} a_{j-1}+p^{j}*$
 is a root of $f(x)\bmod p^k$; which contradicts that $\mb{a}$ is a representative-root ($\because j<l$, see Def.~\ref{def-rep-root}). 
Thus, $\bar{\beta}\in \mb{a}$; implying $S_{k,i}\subseteq \mb{a}$.

Conversely, consider $\bar{\alpha}\in \mb{a}$. Then, as before, $v_p(\bar{\alpha}_i-\bar{\alpha}) \ge l> \Delta+1$; implying that $\bar{\alpha}\in S_{k,i}$. Thus, $S_{k,i}\supseteq \mb{a}$..
\end{enumerate}
\end{proof}


Next, we get the expression for the length of a representative-root.

\begin{theorem}
\label{thm-root}

For $k>d(\Delta+1)$, the representative-roots, of $f(x)\bmod p^k$, are in a one-to-one correspondence with $\Z_p$-roots of $f$. Moreover, the length of the representative-root $\mb{a}$, corresponding to $\alpha_i$, is 
$l_{i,k} := \ceil{(k-\nu_i)/e_i}$.

\end{theorem}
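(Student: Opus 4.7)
The plan is to show, first, that every representative-root $\mb{a}$ equals some neighborhood $S_{k,i}$, thereby setting up the bijection, and then to compute the length of $S_{k,i}$ using the factorization $f(x) = (x-\alpha_i)^{e_i} f_i(x)$ together with Lemma \ref{lemma-uni-val}.

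For the bijection I would define the map $\alpha_i \mapsto S_{k,i}$; by Theorem \ref{thm-property}(3), $S_{k,i}$ is precisely the representative-root containing $\bar{\alpha}_i$, so this is well-defined. Injectivity is immediate: if $S_{k,i} = S_{k,j}$ with $i \neq j$, then both $\bar{\alpha}_i$ and $\bar{\alpha}_j$ lie in the same representative-root, contradicting part (2) of Theorem \ref{thm-property}. For surjectivity, pick any $\bar{\alpha}$ in a representative-root $\mb{a}$; by Lemma \ref{lemma-uni-assoc} there is a unique $i$ with $\bar{\alpha} \in S_{k,i}$. Since $S_{k,i}$ is itself a representative-root and representative-roots disjointly partition the root-set (Proposition \ref{prop-root-set}), the fact that $\bar{\alpha}$ lies in both $\mb{a}$ and $S_{k,i}$ forces $\mb{a} = S_{k,i}$.

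For the length I would characterize $\mb{a} = S_{k,i}$ analytically: for any $\bar{\alpha}$ with $v_p(\bar{\alpha} - \alpha_i) > \Delta$, Lemma \ref{lemma-uni-val} gives $v_p(f_i(\bar{\alpha})) = \nu_i$, so $v_p(f(\bar{\alpha})) = e_i \cdot v_p(\bar{\alpha} - \alpha_i) + \nu_i$, and therefore $f(\bar{\alpha}) \equiv 0 \bmod p^k$ is equivalent to $v_p(\bar{\alpha}-\alpha_i) \geq l_{i,k}$. Combined with the defining constraint $v_p(\bar{\alpha}-\alpha_i) > \Delta+1$ of $S_{k,i}$, this yields $S_{k,i} = \{\bar{\alpha} : v_p(\bar{\alpha}-\alpha_i) \geq \max(\Delta+2,\,l_{i,k})\}$, whose length as a representative (using $\bar{\alpha}_i \equiv \alpha_i \bmod p^k$) is $\max(\Delta+2, l_{i,k})$. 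The theorem thus reduces to the inequality $l_{i,k} \geq \Delta + 2$.

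The main obstacle is exactly this last inequality: it requires a sharp upper bound on $\nu_i$. Applying discriminant Property (3) from Section \ref{sec-def} to the factorization $\rad(f) = (x-\alpha_i)\cdot \rad(f_i)$, one obtains $\Delta = v_p(D(\rad(f_i))) + 2\,v_p(\rad(f_i)(\alpha_i))$, hence $v_p(\rad(f_i)(\alpha_i)) \leq \Delta/2$. Expanding $\rad(f_i)(\alpha_i) = \prod_{j\ne i}(\alpha_i-\alpha_j)\cdot\prod_{w}g_w(\alpha_i)$ and weighting each factor by its multiplicity $e_j$ or $t_w$ (whose total is $d-e_i$) then gives $\nu_i \leq (d-e_i)\Delta/2$. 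A short calculation shows that $(k-\nu_i)/e_i > \Delta+1$ is implied by $k > e_i(\Delta+1) + (d-e_i)\Delta/2 = \tfrac{d+e_i}{2}\Delta + e_i$, and this bound is at most $d(\Delta+1)$ for every $i \in [n]$. Hence the hypothesis $k > d(\Delta+1)$ already secures $l_{i,k} \geq \Delta+2$ uniformly, completing the proof.
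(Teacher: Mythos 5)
Your proof is correct and, for the length computation, takes a genuinely different route than the paper. The bijection part is essentially identical to the paper's: both use Theorem~\ref{thm-property} parts~(2) and~(3) together with Proposition~\ref{prop-root-set}. For the length, the paper argues as follows: it considers the actual representative-root $\mb{a}$ (already known to have length $l>\Delta+1$ by Theorem~\ref{thm-property}(1)), observes that $l$ equals the minimum of $v_p(\bar{\alpha}-\alpha_i)$ over $\bar{\alpha}\in\mb{a}$, applies the equivalence ``root $\iff v_p(\bar{\alpha}-\alpha_i)\geq l_{i,k}$'' to deduce $l\geq l_{i,k}$, and then closes with a can't-reduce-the-length argument to get $l=l_{i,k}$; the inequality $l_{i,k}>\Delta+1$ thus drops out as a \emph{consequence}. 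You instead compute $S_{k,i}$ as a set, getting $S_{k,i}=\{\bar{\alpha}\,:\,v_p(\bar{\alpha}-\alpha_i)\geq\max(\Delta+2,\,l_{i,k})\}$, and then prove \emph{directly} that $l_{i,k}\geq\Delta+2$ using a new auxiliary bound $\nu_i\leq(d-e_i)\Delta/2$ (derived from discriminant Property~(3) applied to $\rad(f)=(x-\alpha_i)\,\rad(f_i)$, plus Fact~\ref{fact-delta}) and a short optimization over $e_i\in[1,d]$. This bound on $\nu_i$ is nowhere stated in the paper, and your calculation explains \emph{why} the threshold $k>d(\Delta+1)$ is the right one, which the paper leaves implicit. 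A small imprecision: ``weighting each factor by its multiplicity $e_j$ or $t_w$ (whose total is $d-e_i$)'' should read ``whose total is at most $d-e_i$,'' since $\deg(g_w)\geq 2$; the inequality is what you actually use, so the conclusion stands. Note also that the inequality $l_{i,k}\geq\Delta+2$ could have been obtained for free from the fact (Theorem~\ref{thm-property}(3)) that $S_{k,i}$ equals a representative-root: if $l_{i,k}\leq\Delta+1$ then the one-shorter representative would consist entirely of roots, contradicting Definition~\ref{def-rep-root}. So your explicit bound on $\nu_i$, while correct and illuminating, does more work than strictly necessary.
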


\begin{proof}

By Proposition \ref{prop-root-set}, every root of $f\bmod p^k$ is in exactly one of the representative-roots. So each reduced $\Z_p$-root $\bar{\alpha}_i:=\alpha_i\bmod p^k$ is in a unique representative-root. Thus, by Theorem \ref{thm-property} parts (2) \& (3), we get the one-to-one correspondence as claimed.

Consider a $p$-adic integer $\bar{\alpha}$ with $v_p(\bar{\alpha}-\alpha_i)=: l_{\bar{\alpha}}> \Delta$. We have the following equivalences: $\bar{\alpha}\in\mb{a}$ iff 
$v_p(f(\bar{\alpha}))\geq k$ iff
$v_p((\bar{\alpha}-\alpha_i)^{e_i}\cdot f_i(\bar{\alpha}))\geq k$ iff
$e_i l_{\bar{\alpha}}+\nu_i\geq k$ (by Lemma \ref{lemma-uni-val}) iff
$l_{\bar{\alpha}}\geq \ceil{(k-\nu_i)/e_i} =l_{i,k}$.

Write the representative-root corresponding to $\alpha_i$ as $\mb{a}=:a_0+p a_1+p^2 a_2+\ldots+p^{l-1} a_{l-1}+p^{l}*$. Clearly, $l \,=\, \min \{ l_{\bar{\alpha}} \mid \bar{\alpha}\in \mb{a} \} \,\ge\, l_{i,k}$. Note that if $l>l_{i,k}$ then by the equivalences we could reduce the length $l$ of the representative-root $\mb{a}$; which is a contradiction. Thus, $l=l_{i,k}$.
\end{proof}

\subsection{Formula for $N_k(f)$-- Proof of Corollary \ref{cor1} }
\label{sec-main-2}

For large enough $k$, the earlier section gives us an easy way to count the roots. In fact, we have the following simple formula for $N_k(f)$.

\begin{theorem}[Roots mod $p^k$]
\label{thm-formula}

For $k>d(\Delta+1)$, $N_{k}(f)=\sum_{i\in[n]} p^{ k - \ceil{(k-\nu_i)/e_i} }$, where clearly $\nu_i, e_i$ and $n$ (as in Sec.\ref{sec-def}) are independent of $k$.

\end{theorem}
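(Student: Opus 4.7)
The plan is to assemble the results of the previous two subsections: Lemma \ref{lemma-uni-assoc} gives us a partition of the root-set of $f \bmod p^k$, Theorem \ref{thm-property} identifies each block of the partition with a representative-root, and Theorem \ref{thm-root} tells us the length of that representative-root. Counting elements of a representative-root then yields the formula.

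First I would partition the roots of $f \bmod p^k$ by which $\Z_p$-root they are close to. Concretely, for $k > d(\Delta+1)$, Lemma \ref{lemma-uni-assoc} guarantees that every $\bar{\alpha} \in \Z/\langle p^k\rangle$ with $f(\bar{\alpha}) \equiv 0 \bmod p^k$ satisfies $v_p(\bar{\alpha}-\alpha_i) > \Delta+1$ for exactly one $i \in [n]$. This immediately gives the disjoint decomposition
\[
\{\,\bar{\alpha} \in \Z/\langle p^k\rangle \,:\, f(\bar{\alpha}) \equiv 0 \bmod p^k\,\} \;=\; \bigsqcup_{i=1}^{n} S_{k,i},
\]
so that $N_k(f) = \sum_{i=1}^{n} |S_{k,i}|$.

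Next I would compute $|S_{k,i}|$ for each fixed $i$. By Proposition \ref{prop-root-set}, the reduced $\Z_p$-root $\bar{\alpha}_i := \alpha_i \bmod p^k$ lies in some unique representative-root $\mathbf{a}$ of $f(x) \bmod p^k$. Theorem \ref{thm-property}(3) identifies $\mathbf{a} = S_{k,i}$, and Theorem \ref{thm-root} asserts that $\mathbf{a}$ has length exactly $l_{i,k} = \lceil (k-\nu_i)/e_i \rceil$. Since a representative-root of length $l$ in $\Z/\langle p^k\rangle$ is a coset of the form $a_0 + p a_1 + \cdots + p^{l-1} a_{l-1} + p^l *$, it contains precisely $p^{k-l}$ elements. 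Hence $|S_{k,i}| = p^{k - \lceil (k-\nu_i)/e_i \rceil}$, and summing over $i \in [n]$ gives the claimed formula. Finally, independence of $\nu_i$, $e_i$, $n$ from $k$ is immediate from their definitions in Section \ref{sec-def}.

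Because all the substantive work has been done in the earlier lemmas, I do not anticipate a genuine obstacle here; the task is just to cite the right pieces in the right order. The only point that requires a moment's care is making sure the partition and the bijection between $\Z_p$-roots and representative-roots match up: the representative-root containing $\bar{\alpha}_i$ must be the same one whose elements are exactly the roots of $f \bmod p^k$ close to $\alpha_i$, which is precisely what Theorem \ref{thm-property}(3) (together with Theorem \ref{thm-root}) certifies.
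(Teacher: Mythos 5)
Your proposal is correct and follows essentially the same route as the paper: partition the roots of $f \bmod p^k$ into neighborhoods via Lemma \ref{lemma-uni-assoc}, identify each neighborhood with a representative-root by Theorem \ref{thm-property}(3), and read off its size from the length formula in Theorem \ref{thm-root}. You have merely spelled out the citations more explicitly than the paper's terse proof, which jumps almost directly to Theorem \ref{thm-root}.
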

\begin{proof}

Fix $i\in[n]$ and $k>d(\Delta+1)$. By Theorem \ref{thm-root} we get that in the unique representative-root $\mathbf{a}$, corresponding to $\alpha_i \bmod p^k$, the $\left( k - \ceil{(k-\nu_i)/e_i}\right)$-many higher-precision coordinates could be set arbitrarily from $[0\ldots p-1]$ (while the rest lower-precision ones are fixed).  That gives us the count via contribution for each $i\in[n]$. 
Moreover, the sum over neighborhoods, for each $i\in[n]$, gives us exactly $N_k(f)$. 

Also, note that if $n=0$ then the count $N_k(f)=0$.
\end{proof}


\begin{proof}[Proof of Corollary \ref{cor1}]

Theorem \ref{thm-formula} gives a closed form expression for $N_{k}(f)$, when $k\geq k_0:= d(\Delta+1)+1 \le d(2d-1)(\log_p C+\log_p d)+1$.

For the other part, note that the discriminant $D(f)\ne0$  iff $f$ is squarefree. In the squarefree case $e_i=1$, for all $i\in [n]$. By Theorem \ref{thm-formula}, $N_{k}(f)=\sum_{i\in[n]} p^{\nu_i}$; which is independent of $k$.
\end{proof}

\subsection{Computing Poincar\'e series-- Proof of Theorem \ref{thm-main} }
\label{sec-main-3}

Building up on the ideas of the previous sections, we will show how to deterministically compute Poincar\'e series $P(t)=\sum_{k=0}^{\infty} N_{k}(f)(p^{-1}t)^{k}$, associated to the input $f(x)$, efficiently; thereby proving Theorem \ref{thm-main}. Before that, we need few notation as follows.

\smallskip\noindent 
Set $k_0:= d(\Delta+1)+1$ so we know by Theorem \ref{thm-formula} that for $k\geq k_0$, $N_k(f)=\sum_{i=1}^{n} N_{k,i}(f)$, where $N_{k,i}(f):=p^{k-\ceil{(k-\nu_i)/e_i}}$. For each $i\in [n]$, define $k_i$ to be the least integer such that $k_i\geq k_0$ and $(k_i-\nu_i)/e_i$ is an integer. Then, Poincar\'e series $P(t)$ can be partitioned into finite and {\em infinite} sums as, 
$$ P(t) \;=\; P_{0}(t) \,+\, \sum_{i=1}^{n} P_{i}(t)$$ 
\noindent
where $P_{i}(t):=\sum_{k=k_i}^{\infty} N_{k,i}(f)\cdot(p^{-1}t)^{k}$ and $P_0(t):= \left( \sum_{k=0}^{k_0 -1} N_{k}(f)\cdot (p^{-1}t)^{k} \right) \;+\; \sum_{i=1}^{n}\sum_{k=k_0}^{k_i -1} N_{k,i}(f)\cdot(p^{-1}t)^{k}$.

We now compute the multiplicity $e_i$ by viewing it as the {\em step} that increments the length, of the representative-root associated to $\alpha_i$, as $k$ keeps growing above $k_0$.

\begin{lemma}[Compute $e_i$]
\label{lemma-vi-ei}
We can compute the number of $\Z_p$-roots $n$ of $f$ as well as $k_i, \nu_i$ and $e_i$, for each $i\in [n]$, in deterministic poly($d, \log C+\log p$) time.
\end{lemma}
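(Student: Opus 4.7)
The plan is to exploit Theorem~\ref{thm-root}: for each $k \geq k_0$, the representative-root corresponding to $\alpha_i$ has length $l_{i,k} = \lceil (k-\nu_i)/e_i \rceil$. By invoking the MSI algorithm of Theorem~\ref{thm-dms} at several values of $k$ in a short window above $k_0$ and observing how these lengths evolve, we will recover $(e_i, \nu_i, k_i)$ for each $i \in [n]$.

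\emph{Step~1 (compute $\Delta$ and $k_0$).} First compute $\rad(f) = f/\gcd(f,f')$ in $\Q[x]$ by the Euclidean algorithm, clear denominators, and compute its discriminant as a polynomial in the coefficients (Property~(4) of Section~\ref{sec-def}). Set $\Delta := v_p(D(\rad(f)))$ and $k_0 := d(\Delta+1)+1$. Since $\Delta = O(d(\log C+\log d))$, this yields $k_0\log p = \poly(d,\log C,\log p)$.

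\emph{Step~2 (sample MSIs).} Invoke the MSI algorithm of Theorem~\ref{thm-dms} at each $k \in \{k_0, k_0+1, \ldots, k_0+2d\}$; each call runs in $\poly(d,k\log p) = \poly(d,\log C,\log p)$ time. From a single output read off $n = \sum_I \deg(I)$, which by Theorem~\ref{thm-root} equals the number of distinct $\Z_p$-roots of $f$.

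\emph{Step~3 (extract $e_i, k_i, \nu_i$).} For each $i$, inspect the sequence $l_{i,k_0}, l_{i,k_0+1}, \ldots, l_{i,k_0+2d}$. Each consecutive difference is $0$ or $1$, and an increment from $k$ to $k+1$ occurs exactly when $k \equiv \nu_i \pmod{e_i}$. Since $e_i \leq d$, the window of length $2d+1$ exhibits at least two increment positions, whence $e_i$ is the gap between consecutive increments, $k_i$ is the unique increment position in $[k_0, k_0+e_i-1]$, and $\nu_i = k_i - e_i \cdot l_{i,k_i}$.

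\emph{The hard part} will be matching representative-roots across different levels: an MSI may group several rep-roots of the same length, and individually enumerating them would need deterministic root-finding over $\F_p$. This is handled via the natural parent-child tree of MSIs---an MSI at level $k+1$ arises as a refinement or extension of an MSI at level $k$ by appending generators. Rep-roots with distinct $(e_i,\nu_i)$ force the containing MSI to split within the $O(d)$-level window, so each can be tracked individually; rep-roots sharing the same $(e_i,\nu_i)$ produce identical length sequences and contribute identically to $P(t)$, so they may safely be aggregated using the MSI's degree as a multiplicity. Careful bookkeeping of this MSI forest across the $O(d)$ sampled levels then yields the parameters $(e_i,\nu_i,k_i)$ for all $i\in[n]$ in $\poly(d,\log C,\log p)$ total time.
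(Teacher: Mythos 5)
Your proposal is correct and follows essentially the same strategy as the paper: compute $\Delta$ and $k_0$, invoke the MSI algorithm of Theorem~\ref{thm-dms} at $O(d)$ consecutive levels $k\ge k_0$, read off $n$ from the sum of MSI degrees, and recover each triple $(e_i,\nu_i,k_i)$ from the period and phase of the length-increment pattern $l_{i,k}=\lceil(k-\nu_i)/e_i\rceil$. You handle the aggregation of representative-roots inside a single MSI, and the splitting of MSIs across levels, in the same way the paper does---by observing that rep-roots remaining in one MSI throughout the window must share $(e_i,\nu_i)$ and so can be counted with the MSI's degree as multiplicity, while rep-roots with distinct parameters separate into different-length MSIs within $O(d)$ steps.
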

\begin{proof}

By Theorem \ref{thm-dms}, we get all the representative-roots of $f\bmod p^k$ implicitly in the form of maximal split ideals (in short we  call split ideals). By Lemma \ref{lemma-MSI-rep-root}, the length of a split ideal is also the length of all the representative-roots represented by it and the degree is the number of representative roots represented by it.  Since by Theorem \ref{thm-root}, $n$ is also the number of representative roots of $f\bmod p^k$ for $k\geq k_0$ hence we run the algorithm of Theorem \ref{thm-dms} for $k=k_0$ and sum up the degree of all the split ideals obtained, to get $n$. 

Suppose the split ideal $I$ we find contains a representative-root $\mb{a}$ of $f\bmod p^k$ corresponding to $\alpha_i$, with $k_i$ as defined before. How do we compute $k_i$? By Theorem \ref{thm-root} length of $\mb{a}$, when $k=k_i$, is $l_{i,k_i}=(k_i-\nu_i)/e_i$. Now, for all $k=k_i+1, k_i+2,\ldots,k_i+e_i$, the length $l_{i,k}$ remains equal to $l_{i,k_i}+1$; while for the next $k=k_i+e_i+1$, $l_{i,k}$ increments by one.
 
So we run the algorithm of Theorem \ref{thm-dms}, for several $k\ge k_0$. When we find the length incrementing by one, namely, at the two values $k=k_i+1$ and $k=k_i':=k_i+1+e_i$, then we have found $e_i$ (and $k_i$). 
From the equation, $k_i-\nu_i= e_i\cdot l_{i,k_i}$ , we also find $\nu_i$.

Suppose the split ideal $I$ we find contains {\em two} representative-roots $\mb{a}$ and $\mb{b}$ mod $p^k$, corresponding to $\Z_p$-roots $\alpha_i$ and $\alpha_j$ respectively, such that $e_i\neq e_j$ (wlog say $e_i<e_j$). In this case, even if $\mb{a}$ and $\mb{b}$ have the same length, when $k=k_i$, they will evolve to different length representative-roots when we go to a `higher-precision' arithmetic mod $p^{k_i+1+e_i}$ (by formula in Thm.~\ref{thm-root}). So $\mb{a}, \mb{b}$ must lie in different length split ideals, say $I_a$ and $I_b$ respectively.

Now, for another representative-root $\mb{c}$ in $I_a$, say corresponding to $\alpha_s$, we have $e_i=e_s$ and hence $\nu_i=\nu_s$. By computing $e_i$ and $\nu_i$ as before, now using the length of $I$ and $I_a$, we compute $e_s$ and $\nu_s$ (and $k_s$) for every $\mb{c}$ in $I_a$. Since, by Lemma \ref{lemma-MSI-rep-root}, the degree of $I_a$ is the number of such representative-roots in $I_a$, we could compute $n$; moreover, we get $k_i, \nu_i, e_i$ for all $i\in [n]$.

Clearly, we need to run the algorithm of Theorem \ref{thm-dms} at most $2\max_{i\in [n]}\{ e_i\}= O(d)$ times, to study the evolution of split ideals (implicitly, that of the underlying representative-roots). Also $\Delta$ is logarithm (to base $p$) of the determinant of a Sylvester matrix which gives $\Delta=O(d\cdot(\log_p C+\log_p d))$. So, the algorithm is poly-time as claimed.
\end{proof}

Now we prove that the infinite sums $P_i(t)$ are formally equal to rational functions of $t=p^{-s}$.

\begin{lemma}[Infinite sums are rational]
\label{lemma-converge}

For each $i\in [n]$, the series $P_i(t)$ is a rational function of $t$ as, 
\[ P_i(t) \;=\; \frac{t^{k_i}\cdot(p-t(p-1)-t^{e_i})}{p^{(k_i-\nu_i)/e_i}\cdot(1-t)\cdot(p-t^{e_i})}. \]

\end{lemma}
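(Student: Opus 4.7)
The plan is to reduce $P_i(t)$ to a standard geometric-series identity by exploiting the alignment condition that defines $k_i$. First I would substitute the definition $N_{k,i}(f)=p^{k-\lceil(k-\nu_i)/e_i\rceil}$ to cancel the $p^{-k}$ factor, so that
\[
P_i(t) \;=\; \sum_{k=k_i}^{\infty} p^{-\lceil(k-\nu_i)/e_i\rceil}\, t^k.
\]
This removes the $p^k$ term and leaves a pure power series in $t$ whose coefficients depend on the ceiling only through $(k-\nu_i)/e_i$.

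Next, I would exploit the key property of $k_i$: by its very definition, $(k_i-\nu_i)/e_i$ is an integer. Writing $k=k_i+r$ with $r\geq 0$, the ceiling splits cleanly as
\[
\lceil(k-\nu_i)/e_i\rceil \;=\; (k_i-\nu_i)/e_i \,+\, \lceil r/e_i\rceil,
\]
which lets me pull out the factor $t^{k_i}/p^{(k_i-\nu_i)/e_i}$ and reduces the problem to evaluating the auxiliary series $S(t):=\sum_{r\geq 0} p^{-\lceil r/e_i\rceil}\, t^r$.

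To evaluate $S(t)$, I would write $r=e_i q+s$ with $q\geq 0$ and $0\leq s<e_i$, so that $\lceil r/e_i\rceil=q$ when $s=0$ and $\lceil r/e_i\rceil=q+1$ when $s\geq 1$. Summing first over $s$ (a finite geometric sum giving $1+p^{-1}(t+t^2+\cdots+t^{e_i-1})=1+p^{-1}\,\frac{t-t^{e_i}}{1-t}$) and then over $q$ (a formal geometric series with ratio $p^{-1}t^{e_i}$, giving $1/(1-p^{-1}t^{e_i})$), I obtain, after clearing the factor $p$ in numerator and denominator,
\[
S(t) \;=\; \frac{p-t(p-1)-t^{e_i}}{(1-t)\,(p-t^{e_i})}.
\]
Multiplying by the prefactor $t^{k_i}/p^{(k_i-\nu_i)/e_i}$ then yields exactly the claimed closed form for $P_i(t)$.

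The only ``obstacle'' is the bookkeeping of the ceiling function when $e_i>1$, but the integrality $(k_i-\nu_i)/e_i\in\Z$ makes this completely routine. I would note, finally, that the equality is formal (as power series in $t$), which is all that is needed to express $P(t)$ as a rational function of $t$ and hence $Z_{f,p}(s)$ as a rational function of $t=p^{-s}$.
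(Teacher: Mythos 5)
Your proof is correct and follows essentially the same approach as the paper: both decompose the series by residue classes modulo $e_i$, exploit the integrality of $(k_i-\nu_i)/e_i$ to make the ceiling periodic, and reduce to geometric series. Your version is slightly cleaner in that you cancel $p^k$ against $(p^{-1}t)^k$ up front and sum the finite residue block before the infinite block-index sum, but the underlying computation is the same.
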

\begin{proof}

Recall that $P_i(t)=\sum\limits_{k=k_i}^{\infty} N_{k,i}(f)\cdot (p^{-1}t)^{k}$. For simplicity write $T:= p^{-1}t$ and define integer $\delta_i:= k_i-(k_i -\nu_i)/e_i$. Now $P_i$ can be rewritten using residues mod $e_i$ as, 
$$ P_i(t) \;=\; \sum\limits_{l=k_i}^{k_i+e_i-1}\sum\limits_{k=0}^{\infty} N_{l+ke_i,i}(f)\cdot T^{l+ke_i} \;. $$ 
For simplicity take $l=k_i$ and consider the sum, $\sum\limits_{k=0}^{\infty} N_{k_i+ke_i,i}(f)\cdot T^{k_i+ke_i}$. 
We find that $N_{k_i,i}(f)= p^{\delta_i}, N_{k_i+e_i,i}(f)= p^{\delta_i+e_i-1}, N_{k_i+2e_i,i}(f)= p^{\delta_i+2(e_i-1)}$, and so on. Hence,

$\sum\limits_{k=0}^{\infty} N_{k_i+ke_i,i}(f)\cdot T^{k_i+ke_i} \;=\; p^{\delta_i}T^{k_i}\cdot[1+p^{e_i-1}T^{e_i}+(p^{e_i-1}T^{e_i})^2+\ldots]$
$ =\; p^{\delta_i}\cdot T^{k_i}/ \left( 1-p^{e_i-1}T^{e_i} \right)$.
\begin{align*}
\text{So,  } P_i(t) &\;=\; \frac{p^{\delta_i}T^{k_i}}{1-p^{e_i-1}T^{e_i}}+\frac{p^{\delta_i}T^{k_i+1}}{1-p^{e_i-1}T^{e_i}}+\frac{p^{\delta_i+1}T^{k_i+2}}{1-p^{e_i-1}T^{e_i}}+\ldots+\frac{p^{\delta_i+e_i-2}T^{k_i+e_i-1}}{1-p^{e_i-1}T^{e_i}} \\
&\;=\; \frac{p^{\delta_i}T^{k_i}}{1-p^{e_i-1}T^{e_i}}+ \frac{p^{\delta_i}T^{k_i+1}}{1-p^{e_i-1}T^{e_i}}\cdot \left( 1+pT+(pT)^2+\ldots+(pT)^{e_i-2} \right) \\
&\;=\; \frac{p^{\delta_i}T^{k_i}}{1-p^{e_i-1}T^{e_i}}\cdot \left( 1+T\cdot \frac{1-(pT)^{e_i-1}}{1-pT} \right) \;.
\end{align*}

Putting $T=t/p$ and $\delta_i= k_i-(k_i -\nu_i)/e_i$ we get,

\[ P_i(t)= \frac{t^{k_i}(p-t(p-1)-t^{e_i})}{p^{(k_i -\nu_i)/e_i}(1-t)(p-t^{e_i})}. \]

\end{proof}

Now we are in a position to prove our main theorem.

\begin{proof}[Proof of Theorem \ref{thm-main}]

Recall $P(t)= P_{0}(t)+\sum_{i=1}^{n} P_{i}(t)$. We first compute $P_0(t)$ which is the sum of two polynomials in $t$, namely, $Q_1(t):=\sum_{j=0}^{k_0 -1} N_{j}(f)(p^{-1}t)^{j}$ of degree $O(d\Delta)$, and $Q_2(t)=\sum_{i=1}^{n}\sum_{l=k_0}^{k_i -1} N_{l,i}(f)(p^{-1}t)^{l}$ also of degree $O(d\Delta)$. By a standard determinant/Sylvester matrix calculation one shows: $d\Delta\le O\left( d^2\cdot (\log_p C+\log_p d) \right) $.

We can compute the polynomial $Q_1(t)$ in deterministic poly($d,\log C+\log p$)-time by calling the root-counting algorithm of \cite{dwivedi2019counting} (Theorem \ref{thm-dms}) $k_0-1$ times, getting each $N_j(f)$, for $j=1,\ldots,k_0 -1$ (note: $N_0(f):=1$).

Polynomial $Q_2(t)$ is a sum of $n\leq d$ polynomials,  each with $k_i-k_0\leq d$ many simple terms. Using Lemma \ref{lemma-vi-ei}, we can compute each $\nu_i, e_i$, hence, $N_{l,i}(f)$. So, computation of $Q_2$ again takes time poly($d, \log C+\log p$).

Lemma \ref{lemma-converge} gives us the rational form expression for $P_i(t)$, for each $i\in [n]$. So, using Lemma \ref{lemma-vi-ei} we can compute the Poincar\'e series \[ P(t)=P_0(t)+\sum\limits_{i=1}^{n} \frac{t^{k_i}(p-t(p-1)-t^{e_i})}{p^{(k_i -\nu_i)/e_i}(1-t)(p-t^{e_i})}\]
  in deterministic poly($d,\log C+\log p$) time.

By inspecting the above expression, the degree of denominator $B(t)$ is $1+\sum_{i=1}^{n} e_i = O(d)$. 
The degree of numerator $A(t)$ is $\le k_0+2d\le O\left( d^2\cdot (\log_p C+\log_p d) \right)$.
\end{proof}

\vspace{-1mm}
\section{Conclusion}
\vspace{-1mm}

We presented the first complete solution to the problem of computing Igusa's local zeta function for any given integral univariate polynomial and a prime $p$. Indeed, our methods work for given $f\in \Z_p[x]$ as our proof for integral $f$ goes by considering its factorization over $Z_p$ (Sec. \ref{sec-def}).

The next natural question to study is whether we could generalize our method to $n$-variate integral polynomials (say, $n=2$?). Note that for growing $n$ this problem is atleast NP-hard.

We also found explicit closed-form expression for $N_k(f)$ and efficiently computed the explicit parameters involved therein, which could be used to compute Greenberg's constants associated with a univariate $f$ and $p$. Greenberg's constants appear in a classical theorem of Greenberg \cite[Thm.~1]{greenberg1966rational} which is a generalization of Hensel's lemma to several $n$-variate polynomials.
We hope that our methods for one-variable case could be generalized to compute Greenberg's constants for $n$-variable case to give an effective version of Greenberg's theorem.

We also hope that our methods extend computing Igusa's local zeta function from characteristic zero ($\Z_p$) to positive characteristic ($\F_p[[T]]$) at least if some standard restrictions are imposed on the characteristic for eg. $p$ is `large-enough'. This is supported by the fact that the root counting algorithm of \cite{dwivedi2019counting} also extends to $\F[[T]]$ for a field $\F$.

\smallskip
\noindent
{\bf Acknowledgements. } We thank anonymous reviewers for their helpful comments and pointing out relevant references to improve the draft of the paper. In particular we thank them for their suggestion which greatly improved the conclusion section and for pointing out a connection to Greenberg's work. N.S.~thanks the funding support from DST (DST/SJF/MSA-01/2013-14) and N.~Rama Rao Chair.

\bibliographystyle{amsplain}

\bibliography{bibliography}



\end{document}